\documentclass[final,3p,times]{elsarticle}
\usepackage{amsmath, amssymb}
\usepackage{float}
\usepackage{balance}
\usepackage{amsfonts}
\usepackage{amsthm}
\usepackage{natbib}
\usepackage{epstopdf}
\usepackage{balance}
\usepackage{cases}
\usepackage{multirow}
\usepackage{color,soul}
\usepackage{booktabs}
\usepackage{changepage}

\usepackage[draft=false,
colorlinks=true,
linkcolor=blue,
citecolor=red,
urlcolor=green]{hyperref}
\usepackage{multirow}
\usepackage{lineno}
\usepackage{mathrsfs}
\theoremstyle{plain}
\newtheorem{theorem}{Theorem}[section]

\newtheorem{lemma}{Lemma}[section]

\usepackage{color}
\theoremstyle{remark}

\makeatletter    
\@addtoreset{equation}{section}
\makeatother    

\biboptions{numbers,sort&compress}

\journal{Journal of \LaTeX\ Templates}











\begin{document}

\begin{frontmatter}

\title{A  new alternating direction  trust region method based on conic model for solving unconstrained optimization }

\author[mymainaddress,mysecondaryaddress,mythirdaddress]{ Honglan Zhu}
\ead{zhuhonglan1981@163.com}

\author[mymainaddress]{ Qin Ni \corref{mycorrespondingauthor}}
\cortext[mycorrespondingauthor]{Corresponding author}
\ead{niqfs@nuaa.edu.cn}

\author[mythirdaddress]{Chuangyin Dang}
\ead{mecdang@cityu.edu.hk}


\address[mymainaddress]{Department of Mathematics, Nanjing University of Aeronautics and Astronautics, Nanjing 210016, People's Republic of China.}
\address[mysecondaryaddress]{Business School, Huaiyin Institute of Technology,  Huaian  223003, People's Republic of China.}
\address[mythirdaddress]{Department of Systems Engineering \& Engineering Management, City University of Hong Kong, Kowloon, Hong Kong SAR.}

\begin{abstract}
 In this paper, a new alternating direction trust region method based on conic model is used to solve unconstrained optimization problems.
By use of the alternating direction method,  the new conic model trust region subproblem is solved by two steps in two orthogonal directions.
This new idea overcomes the shortcomings of conic model subproblem which is difficult to solve.
Then the global convergence of the method under some reasonable conditions is established. Numerical experiment shows that this method may be better than  the dogleg method  to solve the subproblem, especially for large-scale problems.
\end{abstract}

\begin{keyword}
Unconstrained optimization \sep conic  model \sep  trust region method \sep alternating direction  method \sep global convergence
\end{keyword}

\end{frontmatter}

\section{Introduction}

In this paper, we consider the unconstrained optimization problem
\begin{eqnarray}
 \min_{x\in R^n} \ \ f(x), \label{f}
\end{eqnarray}
where   $f(x)$ is continuously differentiable. The problem \eqref{f} have been studied by many researchers, including Han \cite{Han77}, Powell  \cite{Powell83}, Yuan and Sun \cite{yuansun97}, Powell and Yuan \cite{py90}, etc.
There are many  methods to solve problem \eqref{f}, and trust region method is a very effective method (see \cite{py90, Vardi1981, Boggs1987, TOINT1988,zz90,El-Alem}).
In addition, the book of Conn, Gould and Toint \cite{Conn2000} is an excellent and comprehensive one on trust region methods.
Most optimization theory is based on the quadratic model and  {\color{red}uses} the quadratic model to approximate $f(x)$.
That is, at the $k$th iteration, the following subproblem:
\begin{eqnarray}
&&  \min_{s \in R^n} \ \varrho_k(s)=g_k^Ts + \frac{1}{2} s^TB_ks, \label{quadratic}\\
&& \mbox{s.t.}  \ \ \ \ \| s \| \le  \Delta _k, \label{sDelta1}
\end{eqnarray}
is solved to obtain a search direction $s_k$, where $x_k$ is  the current iterate point,  $g_k=\nabla f(x_k)$, $B_k$ is symmetric and an approximation to the Hessian of $f(x)$, $\|\cdot\|$ refers to the Euclidean norm, $ \Delta_k$ is the trust region radius at the $k$th iteration.

There are many methods can be used to solve the subproblem \eqref{quadratic}-\eqref{sDelta1}.
The simple, low cost and effective methods are dogleg methods, such as Powell's single dogleg method  \cite{powell1970} and
Dennis and Mei's double dogleg method \cite{Dennis1979}. Then there are other scholars have studied the dogleg method \cite{ZT2001,ZXZ1987,Zhao2000}.
Now, we recall the  simple dogleg algorithm for solving trust region subproblem with the quadratic model as following algorithm.

\

\textbf{Algorithm 1.1}

Step 0.\quad Input the  data of the $k$th iteration i.e., $g_k, B_k$ and $\Delta_k$.

Step 1.\quad Compute $s_k^{\textrm{N}}=-B_k^{-1}g_k$. If $\|s_k^{\textrm{N}}\| \leq {\Delta}_k$, then $s_{\ast}=s_k^{\textrm{N}}$, and stop.

Step 2.\quad Compute $s_k^c=-\frac{g_k^Tg_k}{g_k^TB_kg_k}g_k$. 
If $\|s_k^c\| \geq {\Delta}_k$, then $s_{\ast}=- \frac{{\Delta}_kg_k}{\|g_k\|}$, and stop. Otherwise, go to Step 3.

Step 3.\quad Compute
\begin{eqnarray}\label{def}
  d =  \|s_k^{\textrm{N}}-s_k^{c}\|^2, \ \ \ e = (s_k^{\textrm{N}}-s_k^{c})^{T}s_k^{c}, \ \ \ f =\|s_k^{c}\|^2-{\Delta}_k^2,
\end{eqnarray}
\qquad \qquad \quad \, then $s_{\ast}= s_k^{c}+\lambda(s_k^{\textrm{N}}-s_k^{c})$, where $\lambda =  \frac{-e+\sqrt{e^2-df}}{d}$.

\

We note that the solution of the subproblem obtained by dogleg methods  is  only an approximate solution of  \eqref{quadratic}-\eqref{sDelta1}.
Moreover, practice experience shows that the quadratic model is not always effective. If the objective function possesses high non-linear property and the iterative point is far away from the minimum,
the quadratic model could not approximate the original problem very well, which may lead to iteration proceed slowly.

In 1980, Davidon \cite{Davidon1980} proposed the conic model for solving unconstrained optimization.
It is an alternative model to substitute the quadratic model. And it has attracted wide attention of many authors in various areas \cite{yu00, ga81, py97, sc82, so82, sy2001, xy98}.
 A typical trust-region subproblem with conic model was first
 proposed by Di and Sun in \cite{ds96}  as following.
\begin{eqnarray}
&&   \min_{s \in R^n}   \   \displaystyle \phi_k (s)= \frac{g_k^Ts}{1- a_k^Ts} + \frac{ s^TB_ks}{2(1-a_k^Ts)^2},\label{phi3}\\
&& \mbox{s.t.}   \ \ \ \  \| s \| \le \Delta _k, 1-a_k^Ts>0, \label{s1a}
  \end{eqnarray}
where {\color{red}horizon vector $a_k\in R^n$,} and $B_k$ is symmetric and positive semidefinite.
 In \cite{ni05}, Ni proposed a new trust region
 subproblem and gave the optimality conditions for the trust region subproblems of a conic model.
 That is,  at the $k$th iteration, the trial step $s_k$ is computed by solving the following conic model trust region subproblem
\begin{eqnarray}
&&  \min_{s \in R^n} \  \phi_k (s) = \frac{g_k^Ts}{1 - a_k^Ts} +  \frac{s^TB_ks}{2(1 - a_k^Ts)^2},\label{phis} \\
&& \mbox{s.t.} \ \ \ \  \| s \| \le  \Delta _k, \ \  |1 - a_k^Ts|\geq \varepsilon_0, \label{sDelta}
\end{eqnarray}
where $\varepsilon_0$ ($0<\varepsilon_0<1$) is a sufficiently small positive number. The  subproblem \eqref{phis}-\eqref{sDelta} considered more comprehensive than \eqref{phi3}-\eqref{s1a}, and will not  miss the solution of  the original problem  \eqref{f}.

The research demonstrated that the conic model is superior to quadratic model to some extent, in particular, for those class of objective
functions with highly vibrating; in addition, the conic model can supply enough freedom to make best use of
both information of gradients and function values in iterate points. In view of this good properties  of conic model, we will continue to study it.

It is noteworthy that the  simple dogleg algorithm for solving trust region subproblem based on the conic model (DCTR) is similar to the above  Algorithm 1.1, where
\begin{eqnarray*}
&&  s_k^{\textrm{N}}=\frac{-B_k^{-1}g_k}{1-a_k^TB_k^{-1}g_k}, \\
&& s_k^c=\frac{-g_k^Tg_k}{g_k^TB_kg_k-a_k^Tg_kg_k^Tg_k}g_.
\end{eqnarray*}
 However, the calculation of DCTR is much more complicated (see \cite{zxz95,lu08,ZhaoS13})

In order to find a simpler method and which is  more suitable for the unique structure of the conic model, we considered  to using the alternating directions method for solving the conic model subproblem.
Alternating directions method (ADM) could date back to \cite{Gabay76}. It has been well studied in the  linearly
constrained convex programming problems.  Because of its significant efficiency and easy implementation, ADM
has attracted wide attention of many authors in various areas, see  \cite{Chen1994,Eckstein1994,HeLY2002,KonM1998,zhangLS17,Xu07}.

In this paper, we combine the subproblem \eqref{phis}-\eqref{sDelta} with alternating direction search method to  propose a new method for solving the conic trust region subproblem. The rest of this paper is organized as follows. In the next section, the motivation and description of the simple alternating direction search algorithm are presented.  In  Section 3, we give the quasi-Newton method based on the conic model for solving unconstrained
optimization problems and prove its global convergence properties. The numerical results  in Section 4 indicate that the algorithm is efficient and robust.

\section{A simple alternating direction search method}

 The conic model $\phi_k (s)$ in the subproblem \eqref{phis}-\eqref{sDelta} has one more   parameter $a_k$ than $\varrho_k(s)$, so $\phi_k (s)$ has more freedom which can take into account the information concerning the function value in the previous iteration which is useful for algorithms.
Furthermore,  the conic model possesses richer interpolation information and can satisfy four interpolation conditions of the
function values and the gradient values at the current and the previous points.
Using these rich interpolation information may improve the performance of the algorithms.
Generally, the choice of the parameters $a_k$ is a descent direction, such as  $g(x_{k-1})$, $g(x_k)$ or $s_{k-1}$ (see \cite{Davidon1980,sc82,so82,lu08,zxz95}).

In view of the unique importance of the parameters $a_k$,   we  consider the following  alternating direction search method to solve the subproblem \eqref{phis}-\eqref{sDelta}.  The new method is divided into two steps. First, we search along the  direction parallel to $a_k$. And then search along the  direction $y_k$ which is perpendicular   to $a_k$.  For convenience, we omit the index $k$ of $a_k, g_k$ and $B_k$ in this section.

In this paper, we assume that $a\neq0$ and $B$ is positive (abbreviated as $B> 0$).

Let
\begin{eqnarray}\label{stay}
s=\tau a +y,
\end{eqnarray}
where $\tau \in R,  y \in R^n$ and $a^T y =0$. Then, the solving process  of subproblem \eqref{phis}-\eqref{sDelta} is divided into the following two stages.

In the first stage, we set $y =0$ and then $s=\tau a$. Substituting it into \eqref{phis}-\eqref{sDelta}, we have
\begin{eqnarray}
 && \min  \ \ \ \rho (\tau)= \frac{\tau a^Tg}{1- \tau a^Ta} + \frac{ \tau^2 a^TBa}{2(1- \tau a^Ta)^2}, \label{rhotau}\\
&& \mbox{s.t.} \ \ \ \ \ \tau \in \Omega ,  \label{tauome}
\end{eqnarray}
where $\Omega= \{\tau \ | \ | \tau | \| a \|\leq \Delta, | 1- \tau  \| a \|^2 |\geq \varepsilon_0 \}$.

For the purpose of clarity, we denote
\begin{eqnarray}\label{tauDdu}
  \tau_{ \Delta}= \frac{\Delta}{\| a \|}, \ \ \tau_d= \frac{1- \varepsilon_0}{\| a \|^2}, \ \ \tau_m = \frac{1}{\| a \|^2},  \ \ \tau_u= \frac{1+ \varepsilon_0}{\| a \|^2}.
\end{eqnarray}
Then,
 \begin{eqnarray}\label{Omega}
\Omega= \{\tau \ | \ | \tau | \leq \tau_{ \Delta}\}\cap\{ \tau \ |  \tau\leq \tau_d \ or  \ \tau \geq \tau_u\}.
\end{eqnarray}

 In the following, we consider three different cases of \eqref{rhotau}-\eqref{tauome}:

 (1) If $\Delta \|a\|\leq 1-\varepsilon_0$, then $\tau_{ \Delta}\leq\tau_d$ and \eqref{rhotau}-\eqref{tauome} becomes
\begin{eqnarray}
 (\text{P1}) \left\{\begin{array}{l}
  \min  \ \ \ \rho (\tau), \\
 \mbox{s.t.} \ \ \ \ \   \tau \in [-\tau_{ \Delta}, \tau_{ \Delta}]. \ \ \ \ \ \ \ \ \ \ \ \ \ \ \ \ \ \end{array}\right.
\end{eqnarray}

(2) If $|1- \Delta \|a\| \, |<  \varepsilon_0$, then  $\tau_d<\tau_{ \Delta}<\tau_u$ and \eqref{rhotau}-\eqref{tauome} becomes
 \begin{eqnarray}
 (\text{P2}) \left\{\begin{array}{l}
 \min  \ \ \ \rho (\tau), \\
 \mbox{s.t.} \ \ \ \ \   \tau \in [-\tau_{\Delta}, \tau_d]. \ \ \  \ \ \ \ \ \ \ \ \ \  \  \ \  \ \end{array}\right.
\end{eqnarray}

(3) If $\Delta \|a\| \geq 1+\varepsilon_0$, then $\tau_u\leq\tau_{ \Delta}$ and \eqref{rhotau}-\eqref{tauome} becomes
 \begin{eqnarray}
 (\text{P3}) \left\{\begin{array}{l}
  \min  \ \ \ \rho (\tau), \\
 \mbox{s.t.} \ \ \ \ \   \tau \in [-\tau_{\Delta}, \tau_d]\cup [\tau_u,  \tau_{\Delta}].\end{array}\right.
\end{eqnarray}

Now, we discuss the stationary points of $\rho (\tau)$. By the direct computation,
 we have that the   derivative of  $\rho (\tau)$ is 
{\color{red}\begin{eqnarray}\label{rhodao}
\rho '(\tau)=  \frac{ a_{\tau}\tau + a^Tg}{-\| a \|^6(\tau- \tau_m)^3},
 \end{eqnarray}}
 where
\begin{eqnarray}\label{ataum}
a_{\tau} = a^TBa-a^Taa^Tg.
 \end{eqnarray}
From \eqref{tauDdu}, we know that $0<\tau_d<\tau_m<\tau_u$ and {\color{red}then  from \eqref{Omega}} $\tau_m\not\in \Omega$.
 Therefore, if $a_{\tau}\neq 0$ then $\rho(\tau)$ has only one   stationary point
\begin{eqnarray}\label{tau1}
\tau_{cp}=  \frac{ -a^Tg}{a_{\tau}}.
 \end{eqnarray}


 \begin{lemma} (1) \  If $a_{\tau}<0$ then $\tau_m < \tau_{cp}$
 and $\rho (\tau)$ is  monotonically decreasing in the in the trust region {\color{red} $(\tau_m,  \tau_{cp})$};
 $\rho (\tau)$ is  monotonically increasing  for  {\color{red}$ \tau < \tau_m$ and    $\tau > \tau_{cp}$}.

 (2) \  If  $a_{\tau} = 0$, then $a^Tg>0$ and $\rho (\tau)$ is  monotonically increasing for  {\color{red}$\tau < \tau_m$};
 $\rho (\tau)$ is  monotonically decreasing for  {\color{red}$ \tau > \tau_m$}.

  (3) \  If  $a_{\tau} > 0$, then $\tau_{cp}< \tau_m$ and $\rho (\tau)$ is  monotonically increasing in  the trust region  {\color{red}$(\tau_{cp},  \tau_m)$};
 $\rho (\tau)$ is  monotonically decreasing for  {\color{red}$ \tau < \tau_{cp}$} and    {\color{red}$\tau > \tau_m$.}
  \end{lemma}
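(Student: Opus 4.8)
The plan is to turn the whole lemma into a sign analysis of the two factors appearing in the derivative formula \eqref{rhodao}. Write $N(\tau)=a_\tau\tau+a^Tg$ for the numerator and $D(\tau)=-\|a\|^6(\tau-\tau_m)^3$ for the denominator of $\rho'(\tau)$. The denominator needs no work: since $\|a\|^6>0$, we have $D(\tau)>0$ when $\tau<\tau_m$ and $D(\tau)<0$ when $\tau>\tau_m$. So all the content is in locating the (at most one) zero of the affine map $N$ relative to $\tau_m$ and reading off the sign of $N$ on the resulting subintervals.

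The single computation that drives everything is the value of $N$ at $\tau_m=1/\|a\|^2$. Using \eqref{ataum} and \eqref{tauDdu},
\[
N(\tau_m)=a_\tau\tau_m+a^Tg=\frac{a^TBa-\|a\|^2\,a^Tg}{\|a\|^2}+a^Tg=\frac{a^TBa}{\|a\|^2}>0,
\]
the strict inequality coming from $B>0$ and $a\neq0$. In particular, when $a_\tau=0$ the identity $a^TBa=\|a\|^2\,a^Tg$ already forces $a^Tg=a^TBa/\|a\|^2>0$, which is the first assertion of part (2); and in that case $N\equiv a^Tg>0$, so $\rho'$ has the same sign as $D$, giving $\rho$ increasing for $\tau<\tau_m$ and decreasing for $\tau>\tau_m$.

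For $a_\tau\neq0$, $N$ is strictly monotone with unique root $\tau_{cp}=-a^Tg/a_\tau$ as in \eqref{tau1}. If $a_\tau<0$, $N$ is strictly decreasing, so $N(\tau_m)>0$ forces $\tau_{cp}>\tau_m$; then $N>0$ on $(-\infty,\tau_m)$ where $D>0$ (so $\rho'>0$), $N>0$ on $(\tau_m,\tau_{cp})$ where $D<0$ (so $\rho'<0$), and $N<0$ on $(\tau_{cp},\infty)$ where $D<0$ (so $\rho'>0$), which is exactly part (1). If $a_\tau>0$, $N$ is strictly increasing, $N(\tau_m)>0$ forces $\tau_{cp}<\tau_m$, and the symmetric bookkeeping — $\rho'<0$ on $(-\infty,\tau_{cp})$, $\rho'>0$ on $(\tau_{cp},\tau_m)$, $\rho'<0$ on $(\tau_m,\infty)$ — yields part (3).

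I do not expect a genuine obstacle: once the identity $N(\tau_m)=a^TBa/\|a\|^2$ is isolated, the remainder is a routine sign chase, and the positivity of $B$ is what makes it work in all three cases. The only point requiring a little care is that $\rho$ and $\rho'$ are undefined at $\tau_m$, so the monotonicity claims must be read on the connected subintervals of $R\setminus\{\tau_m,\tau_{cp}\}$ — which is harmless here since, as already observed, $\tau_m\notin\Omega$, so the singularity never enters the feasible set of \eqref{rhotau}--\eqref{tauome}.
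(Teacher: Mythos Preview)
Your proof is correct and follows essentially the same route as the paper's: both reduce the lemma to the derivative formula \eqref{rhodao} and a single identity coming from $B\succ0$. The paper packages that identity as $\tau_{cp}-\tau_m=a^TBa/(-a_\tau\|a\|^2)$ (equation \eqref{tau1m}), while you compute the equivalent $N(\tau_m)=a^TBa/\|a\|^2>0$; since $N(\tau_m)=-a_\tau(\tau_{cp}-\tau_m)$ these are the same fact. Your version is more explicit in carrying out the sign chase on each subinterval and in handling the $a_\tau=0$ case (which the paper's displayed identity \eqref{tau1m} does not cover, though the conclusion is still left implicit there), but the underlying argument is identical.
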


 \begin{proof} From \eqref{tauDdu} and \eqref{tau1}, we know that if   $a_{\tau} \neq 0$ then
  \begin{eqnarray}\label{tau1m}
\tau_{cp}-\tau_m =  \frac{a^TBa}{- a_{\tau}\| a \|^2}.
 \end{eqnarray}
 Then, {\color{red}since $B \succ 0$,} combining with \eqref{rhodao} we can obtain that the lemma obviously holds.
\end{proof}

  \begin{theorem}   \  If $a^Tg = 0$ then the optimal solution of the subproblem (\text{P1}), (\text{P2}) and (\text{P3}) is
 \begin{equation}\label{tauast0}
\tau_{\ast} = 0.
\end{equation}
  \end{theorem}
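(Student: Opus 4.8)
The plan is to exploit the fact that the hypothesis $a^Tg=0$ causes the linear term of $\rho$ to vanish and turns $\rho$ into a manifestly nonnegative expression. First I would substitute $a^Tg=0$ into \eqref{rhotau}, obtaining
\begin{eqnarray*}
\rho(\tau)=\frac{\tau^{2}\,a^{T}Ba}{2(1-\tau\|a\|^{2})^{2}},
\end{eqnarray*}
which is well defined for every $\tau$ with $1-\tau\|a\|^{2}\neq 0$, in particular on all of $\Omega$ and hence on the feasible sets of (P1), (P2), (P3).

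Next I would observe that $\rho(0)=0$ and that $\rho(\tau)\ge 0$ for every feasible $\tau$, with equality precisely at $\tau=0$. This is immediate: since $B\succ 0$ and $a\neq 0$ we have $a^{T}Ba>0$, the denominator $(1-\tau\|a\|^{2})^{2}$ is strictly positive on the feasible set, and $\tau^{2}\ge 0$ with $\tau^{2}=0$ iff $\tau=0$. Consequently any feasible $\tau$ satisfies $\rho(\tau)\ge 0=\rho(0)$, so $0$ is a global minimizer of $\rho$ on any subset of its domain that contains $0$.

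It then only remains to check that $\tau=0$ is in fact feasible for each subproblem. For (P1) this is clear, since $0\in[-\tau_{\Delta},\tau_{\Delta}]$. For (P2) and (P3) the key observation is that $\tau_{d}=(1-\varepsilon_{0})/\|a\|^{2}>0$ because $0<\varepsilon_{0}<1$, so $0\in[-\tau_{\Delta},\tau_{d}]$; in (P3) this interval is one of the two admissible pieces, so $0$ is feasible there as well. Combining feasibility of $0$ with the inequality $\rho(\tau)\ge\rho(0)$ gives $\tau_{\ast}=0$ as the optimal solution in all three cases, which is \eqref{tauast0}.

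As an alternative I could instead invoke Lemma~2.1(3): when $a^{T}g=0$ we have $a_{\tau}=a^{T}Ba>0$ and $\tau_{cp}=-a^{T}g/a_{\tau}=0$, so $\rho$ is decreasing on $(-\infty,0)$, increasing on $(0,\tau_{m})$, and decreasing on $(\tau_{m},\infty)$; together with $\rho\ge 0$ this again forces the minimizer to be $0$. I expect no real obstacle in this proof; the only mildly delicate point is the feasibility bookkeeping, in particular making sure that the distant branch $[\tau_{u},\tau_{\Delta}]$ in (P3) is handled (the perfect-square form of $\rho$ disposes of it at once, since $\rho>0$ there).
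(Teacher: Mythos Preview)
Your proof is correct and follows essentially the same approach as the paper: substitute $a^{T}g=0$ into \eqref{rhotau} to obtain $\rho(\tau)=\dfrac{\tau^{2}a^{T}Ba}{2(1-\tau\|a\|^{2})^{2}}\ge 0=\rho(0)$. You add more detail than the paper does---in particular the explicit feasibility check that $0$ belongs to each of the three constraint sets and the uniqueness of the minimizer---which the paper's proof tacitly assumes but does not spell out.
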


 \begin{proof} If $a^Tg = 0$ then from \eqref{rhotau} we have
 \begin{eqnarray}
\rho (\tau)= \frac{ \tau^2 a^TBa}{2(1- \tau a^Ta)^2} \geq 0.
\end{eqnarray}
Hence,   the theorem  holds.
\end{proof}

 \begin{theorem}   \  If  $a^Tg \neq 0$, then the optimal solution of the subproblem  (\text{P1}) is
 \begin{equation}\label{tauastP1}
\tau_{\ast}=\left\{\begin{array}{l}
-\tau_{\Delta}, \ \ \ \ \ \ \ \ \ \ \  \ \ \ \ \ \ \ \mbox{if} \ \ a_{\tau} \leq 0,\\
\max\{-\tau_{\Delta}, \tau_{cp}\}, \ \   \mbox{if} \ \ a_{\tau} > 0, a^Tg>0,\\
\min\{\tau_{cp}, \tau_{\Delta}\}, \ \ \ \ \     \mbox{if} \ \ a_{\tau} > 0, a^Tg<0.
\end{array}\right.
\end{equation}
  \end{theorem}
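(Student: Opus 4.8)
The plan is to use the hypothesis of case (P1), $\Delta\|a\|\le 1-\varepsilon_0$ (equivalently $\tau_\Delta\le\tau_d$), which together with $0<\tau_d<\tau_m<\tau_u$ from \eqref{tauDdu}--\eqref{Omega} places the whole feasible interval $[-\tau_\Delta,\tau_\Delta]$ strictly to the left of the pole $\tau_m$; there $1-\tau\|a\|^2=\|a\|^2(\tau_m-\tau)>0$, $\rho$ is smooth, the constraint $|1-\tau\|a\|^2|\ge\varepsilon_0$ is automatic, and the minimum of the continuous $\rho$ over the compact interval is attained. So (P1) is genuinely a one-dimensional minimisation of $\rho$ over $[-\tau_\Delta,\tau_\Delta]\subset(-\infty,\tau_m)$, and I would simply invoke the monotonicity description of $\rho$ on $(-\infty,\tau_m)$ supplied by the preceding Lemma and split on the sign of $a_\tau$.

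If $a_\tau\le 0$, parts (1) and (2) of the Lemma give that $\rho$ is monotonically increasing on all of $(-\infty,\tau_m)$, hence on $[-\tau_\Delta,\tau_\Delta]$, so the minimiser is the left endpoint $\tau_\ast=-\tau_\Delta$. If $a_\tau>0$, part (3) of the Lemma gives $\tau_{cp}<\tau_m$ with $\rho$ decreasing on $(-\infty,\tau_{cp})$ and increasing on $(\tau_{cp},\tau_m)$; thus on $[-\tau_\Delta,\tau_\Delta]$ the function $\rho$ first decreases and then increases, with unconstrained minimiser $\tau_{cp}=-a^Tg/a_\tau$, so the constrained minimiser is $\tau_{cp}$ projected onto $[-\tau_\Delta,\tau_\Delta]$. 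To turn this projection into the closed form of \eqref{tauastP1} I would locate $\tau_{cp}$: if $a^Tg>0$ then $\tau_{cp}<0<\tau_\Delta$, so the projection cannot hit the right endpoint and equals $\max\{-\tau_\Delta,\tau_{cp}\}$; if $a^Tg<0$ then $\tau_{cp}>0>-\tau_\Delta$, so it cannot hit the left endpoint and equals $\min\{\tau_{cp},\tau_\Delta\}$. Collecting the three cases yields \eqref{tauastP1}.

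The argument is short because the heavy lifting --- the sign analysis of $\rho'$ through \eqref{rhodao}--\eqref{ataum} and the placement of $\tau_{cp}$ relative to $\tau_m$ --- is already carried out in the Lemma. The one genuinely necessary step, and the one I would be most careful to write out, is the reduction at the start: checking that the feasible set of (P1) is a single interval lying entirely on one side of the pole $\tau_m$, which is exactly where the case hypothesis $\Delta\|a\|\le 1-\varepsilon_0$ is used, so that the Lemma's monotonicity statements on $(-\infty,\tau_m)$ apply wholesale and no spurious interior stationary point or sign change of $\rho'$ can interfere. After that, everything is the elementary ``minimise a function that decreases then increases over an interval by projecting its stationary point onto the interval'' reasoning.
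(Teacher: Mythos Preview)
Your proposal is correct and follows essentially the same approach as the paper: both arguments begin by observing that in case (P1) the feasible set is $[-\tau_\Delta,\tau_\Delta]$ with $\tau_\Delta\le\tau_d<\tau_m$, then invoke Lemma~2.1 parts (1)--(3) to read off the monotonicity of $\rho$ on this interval, splitting on the sign of $a_\tau$ and then of $a^Tg$ exactly as you do. Your ``project $\tau_{cp}$ onto the interval'' phrasing is a clean summary of the paper's case-by-case location of $\tau_{cp}$ relative to $\pm\tau_\Delta$.
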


 \begin{proof} For  the subproblem  (\text{P1}), we know that $\Omega = [-\tau_{ \Delta}, \tau_{ \Delta}]$ where $\tau_{ \Delta}\leq\tau_d < \tau_m$.

 (1) If  $a_{\tau} \leq 0$, then from Lemma 2.1 (1)(2) we can easily obtain  $\tau_{\ast}= -\tau_{\Delta}$.

 (2) If $ a_{\tau} > 0, a^Tg>0$, then $\tau_{cp}<0$. From Lemma 2.1 (3), we can obtain that if $\tau_{cp}\leq -\tau_{\Delta}$ then $\tau_{\ast}= -\tau_{\Delta}$;
  If $-\tau_{\Delta} < \tau_{cp}<0$,  then  $\tau_{\ast}= \tau_{cp}$. Therefore,  $\tau_{\ast}= \max\{-\tau_{\Delta}, \tau_{cp}\}$.

  (3) If $ a_{\tau} > 0, a^Tg<0$, then $\tau_{cp}> 0$.  From Lemma 2.1 (3), we can obtain that if $0<\tau_{cp}\leq \tau_{\Delta}$ then $\tau_{\ast}= \tau_{cp}$;
  If $\tau_{\Delta} < \tau_{cp}<\tau_m$,  then  $\tau_{\ast}= \tau_{\Delta}$. Therefore,  $\tau_{\ast}= \min\{\tau_{cp}, \tau_{\Delta}\}$.
 \end{proof}

  \begin{theorem}   \  If  $a^Tg \neq 0$, then the optimal solution of the subproblem  (\text{P2}) is
 \begin{equation}\label{tauastP2}
\tau_{\ast}=\left\{\begin{array}{l}
-\tau_{\Delta}, \ \ \ \ \ \ \ \ \ \ \ \ \ \ \ \ \ \ \mbox{if} \ \ a_{\tau} \leq 0,\\
\max\{-\tau_{\Delta}, \tau_{cp}\}, \ \,   \mbox{if} \ \ a_{\tau} > 0, a^Tg>0,\\
\min\{\tau_{cp}, \tau_d\}, \ \ \ \ \     \mbox{if} \ \ a_{\tau} > 0, a^Tg<0.
\end{array}\right.
\end{equation}
  \end{theorem}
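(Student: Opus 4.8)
The plan is to follow the template of the proof of Theorem 2.3 almost verbatim, the only structural change being that the right endpoint of the feasible interval is now $\tau_d$ rather than $\tau_\Delta$. First I would pin down the geometry of the feasible set: in case (2) we have $\tau_d<\tau_\Delta<\tau_u$, so in \eqref{Omega} the piece $[\tau_u,\tau_\Delta]$ is empty and $\Omega$ collapses to $[-\tau_\Delta,\tau_d]$, which is exactly the constraint in (P2). From \eqref{tauDdu} we also have $0<\tau_d<\tau_m$, hence the whole feasible interval lies strictly to the left of $\tau_m$; in particular $\tau_m\notin[-\tau_\Delta,\tau_d]$, so the pole of $\rho'$ at $\tau_m$ (see \eqref{rhodao}) is never encountered and Lemma 2.1's monotonicity descriptions apply cleanly on $[-\tau_\Delta,\tau_d]$.

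Next I would split into the three cases of the statement and invoke Lemma 2.1. If $a_\tau\le0$: when $a_\tau<0$, Lemma 2.1(1) says $\rho$ is monotonically increasing for $\tau<\tau_m$, hence increasing on all of $[-\tau_\Delta,\tau_d]$, so $\tau_\ast=-\tau_\Delta$; when $a_\tau=0$, Lemma 2.1(2) again gives $\rho$ increasing for $\tau<\tau_m$, yielding the same conclusion (this branch thus absorbs the degenerate case $a_\tau=0$, in which Lemma 2.1(2) automatically forces $a^Tg>0$). If $a_\tau>0$ and $a^Tg>0$: then $\tau_{cp}=-a^Tg/a_\tau<0<\tau_d$, and Lemma 2.1(3) gives $\tau_{cp}<\tau_m$ with $\rho$ decreasing on $(-\infty,\tau_{cp})$ and increasing on $(\tau_{cp},\tau_m)$. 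If $\tau_{cp}\le-\tau_\Delta$, then $\rho$ is increasing throughout $[-\tau_\Delta,\tau_d]$ and $\tau_\ast=-\tau_\Delta$; if $-\tau_\Delta<\tau_{cp}$, then $\tau_{cp}$ is an interior minimizer and $\tau_\ast=\tau_{cp}$. Combining, $\tau_\ast=\max\{-\tau_\Delta,\tau_{cp}\}$.

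The remaining case is $a_\tau>0$ and $a^Tg<0$, where $\tau_{cp}=-a^Tg/a_\tau>0$. By Lemma 2.1(3), $\rho$ is decreasing on $(-\infty,\tau_{cp})$ and increasing on $(\tau_{cp},\tau_m)$ with $\tau_{cp}<\tau_m$. If $\tau_{cp}\le\tau_d$, then since $\tau_{cp}>0>-\tau_\Delta$ the point $\tau_{cp}$ lies in $[-\tau_\Delta,\tau_d]$ and is an interior minimizer, so $\tau_\ast=\tau_{cp}$; if $\tau_{cp}>\tau_d$, then $\rho$ is strictly decreasing on all of $[-\tau_\Delta,\tau_d]$, so the minimum is attained at the right endpoint $\tau_\ast=\tau_d$. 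Hence $\tau_\ast=\min\{\tau_{cp},\tau_d\}$, which is the last line of \eqref{tauastP2}.

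The main obstacle is purely bookkeeping: one must verify in each subcase that the feasible interval $[-\tau_\Delta,\tau_d]$ really is contained in $(-\infty,\tau_m)$ so that Lemma 2.1 can be applied without meeting the singularity, and one must correctly compare the sign and position of $\tau_{cp}$ against the endpoints $-\tau_\Delta$ and $\tau_d$. No new analytic input beyond Lemma 2.1 and the ordering $0<\tau_d<\tau_m$ is needed.
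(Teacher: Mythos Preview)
Your proposal is correct and is exactly the approach the paper intends: the paper's own proof of this theorem is simply ``The proof process is similar to the above Theorem~2.2, so we omitted it,'' and you have correctly filled in those omitted details by replacing the right endpoint $\tau_\Delta$ with $\tau_d$ and rerunning the monotonicity argument from Lemma~2.1. (Note that in the paper's numbering the P1 result you are mimicking is Theorem~2.2, not~2.3.)
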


 \begin{proof} The proof process is similar to the above Theorem 2.2, so we omitted it.
%
 \end{proof}

  \begin{theorem}   \  If $ a_{\tau} < 0$, then $a^Tg>0$, $\tau_m < \tau_{cp}$ and the optimal solution of the subproblem  (\text{P3}) is
 \begin{equation}\label{tauastP3-1}
\tau_{\ast}=\left\{\begin{array}{l}
\tau_u, \ \  \  \mbox{if} \ \ \tau_m < \tau_{cp}\leq \tau_u,\\
\tau_{cp}, \   \mbox{if} \ \ \tau_u < \tau_{cp}< \tau_{\Delta},\\
\tilde{\tau}_{\Delta}, \ \   \mbox{if} \ \ \tau_{cp}\geq \tau_{\Delta},
\end{array}\right.
\end{equation}
where
\begin{eqnarray}\label{t-tauastD}
\tilde{\tau}_{\Delta} = \mbox{arg\:min}\{\rho (-\tau_{\Delta}), \rho (\tau_{\Delta})\}.
\end{eqnarray}
  \end{theorem}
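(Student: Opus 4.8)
The plan is to perform, for the two–component feasible set $\Omega=[-\tau_\Delta,\tau_d]\cup[\tau_u,\tau_\Delta]$ of (P3), the same piecewise minimisation of $\rho$ already used for (P1) and (P2), and then to show that in the first two cases the left component $[-\tau_\Delta,\tau_d]$ contributes nothing. First I would settle the two preliminary assertions of the statement. Since $B\succ0$ and $a\neq0$ we have $a^TBa>0$, so when $a_\tau<0$ the identity \eqref{tau1m} gives $\tau_{cp}-\tau_m=a^TBa/(-a_\tau\|a\|^2)>0$, i.e.\ $\tau_m<\tau_{cp}$; and since $\tau_{cp}=-a^Tg/a_\tau>\tau_m>0$ with $a_\tau<0$, the numerator satisfies $-a^Tg<0$, hence $a^Tg>0$. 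I would also recall from \eqref{tauDdu} that under the hypothesis $\Delta\|a\|\geq1+\varepsilon_0$ of (P3) one has $0<\tau_d<\tau_m<\tau_u\leq\tau_\Delta$.

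Next I would minimise $\rho$ separately on the two closed intervals using Lemma 2.1(1). On $[-\tau_\Delta,\tau_d]$, since $\tau_d<\tau_m$, the function $\rho$ is increasing there, so its minimum is $\rho(-\tau_\Delta)$. On $[\tau_u,\tau_\Delta]$, since $\rho$ decreases on $(\tau_m,\tau_{cp})$ and increases on $(\tau_{cp},\infty)$ and $\tau_m<\tau_u\leq\tau_\Delta$, the minimiser is $\tau_u$ when $\tau_{cp}\leq\tau_u$, it is $\tau_{cp}$ when $\tau_u<\tau_{cp}<\tau_\Delta$, and it is $\tau_\Delta$ when $\tau_{cp}\geq\tau_\Delta$; these three cases are exhaustive because $\tau_m<\tau_{cp}$ and $\tau_u\leq\tau_\Delta$. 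The case $\tau_{cp}\geq\tau_\Delta$ is then immediate: the minimiser over $\Omega$ is $\mbox{arg\,min}\{\rho(-\tau_\Delta),\rho(\tau_\Delta)\}=\tilde{\tau}_\Delta$.

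The crux is to prove that in the remaining two cases the right component always wins, i.e.\ $\rho(\tau_{cp})\leq\rho(-\tau_\Delta)$ and, when $\tau_{cp}\leq\tau_u$, $\rho(\tau_u)\leq\rho(-\tau_\Delta)$. I would obtain both by comparing with the finite limit $\rho_{-\infty}:=\lim_{\tau\to-\infty}\rho(\tau)=\bigl(a^TBa-2\|a\|^2a^Tg\bigr)\big/\bigl(2\|a\|^4\bigr)$, using that $\rho(-\tau_\Delta)\geq\rho_{-\infty}$ because $\rho$ is increasing on $(-\infty,\tau_m)$ and $-\tau_\Delta<0<\tau_m$. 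Substituting $\tau_{cp}=-a^Tg/a_\tau$ (so $1-\tau_{cp}\|a\|^2=a^TBa/a_\tau$) yields the closed form $\rho(\tau_{cp})=-(a^Tg)^2/(2a^TBa)$, whence $\rho(\tau_{cp})-\rho_{-\infty}=-a_\tau^2/\bigl(2\|a\|^4a^TBa\bigr)\leq0$, which disposes of the case $\tau_u<\tau_{cp}<\tau_\Delta$. For the case $\tau_{cp}\leq\tau_u$, the hypothesis rewrites via \eqref{tau1m} and $\tau_u-\tau_m=\varepsilon_0/\|a\|^2$ as $(1+\varepsilon_0)\,a^TBa\leq\varepsilon_0\|a\|^2a^Tg$, whereas evaluating $\rho$ at $\tau_u$ (where $1-\tau_u\|a\|^2=-\varepsilon_0$) shows that $\rho(\tau_u)\leq\rho_{-\infty}$ is equivalent to $(1+2\varepsilon_0)\,a^TBa\leq2\varepsilon_0\|a\|^2a^Tg$; since $a^Tg>0$ and $\tfrac{\varepsilon_0}{1+\varepsilon_0}\leq\tfrac{2\varepsilon_0}{1+2\varepsilon_0}$, the former inequality implies the latter, giving $\rho(\tau_u)\leq\rho_{-\infty}\leq\rho(-\tau_\Delta)$ and finishing the proof.

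I expect the last paragraph to be the real obstacle. The piecewise minimisation is a routine consequence of Lemma 2.1, but the claim that $-\tau_\Delta$ may be dropped in the first two cases is not visible from monotonicity alone: it relies on the explicit values of $\rho$ at $\tau_{cp}$, at $\tau_u$, and in the limit $\tau\to-\infty$, together with the observation that $\rho(\tau_{cp})-\rho_{-\infty}$ collapses to $-a_\tau^2$ over a positive quantity and that $t\mapsto t/(1+t)$ is increasing on $(0,\infty)$.
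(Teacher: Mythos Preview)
Your argument is correct, and the route you take for the comparison step is genuinely different from the paper's. Both proofs begin identically: invoke Lemma~2.1(1) to get $a^Tg>0$ and $\tau_m<\tau_{cp}$, then minimise $\rho$ separately on $[-\tau_\Delta,\tau_d]$ (giving $-\tau_\Delta$) and on $[\tau_u,\tau_\Delta]$ (giving $\tau_u$, $\tau_{cp}$, or $\tau_\Delta$ according to the position of $\tau_{cp}$). The divergence is in how to rule out $-\tau_\Delta$ in the first two cases. The paper computes $\rho(\tau_u)-\rho(-\tau_\Delta)$ and $\rho(\tau_{cp})-\rho(-\tau_\Delta)$ directly as rational functions of $\Delta$: for the first it writes the numerator as a quadratic $\Delta^2\|a\|^2 a_\Delta+2\Delta\|a\| b_\Delta+c_\Delta$ and uses $\tau_{cp}\le\tau_u$ to force each coefficient negative; for the second it obtains a perfect-square numerator $-(a_\tau\Delta-\|a\|a^Tg)^2$. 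Your device of interposing the limit $\rho_{-\infty}=\lim_{\tau\to-\infty}\rho(\tau)$ is cleaner: monotonicity on $(-\infty,\tau_m)$ gives $\rho(-\tau_\Delta)\ge\rho_{-\infty}$ once and for all, so $\Delta$ disappears from the comparison, and you are left with the $\Delta$-free identities $\rho(\tau_{cp})-\rho_{-\infty}=-a_\tau^2/(2\|a\|^4a^TBa)$ and the equivalence of $\rho(\tau_u)\le\rho_{-\infty}$ with $(1+2\varepsilon_0)a^TBa\le2\varepsilon_0\|a\|^2a^Tg$, which follows from $\tau_{cp}\le\tau_u$ via the monotonicity of $t\mapsto t/(1+t)$. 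The paper's computation has the minor advantage of yielding strict inequalities and of staying entirely within the feasible interval, whereas your limiting argument trades a small amount of extra justification (existence of the finite limit, passage to the infimum) for substantially less algebra and a structurally transparent reason why the left branch never wins.
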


 \begin{proof} For  the subproblem  (\text{P3}), we know that
\begin{eqnarray} \label{Omega-3}
 \Omega = [-\tau_{\Delta}, \tau_d]\cup [\tau_u,  \tau_{\Delta}],
 \end{eqnarray}
 where $\tau_d < \tau_m < \tau_u$.
If $ a_{\tau} < 0$, then $a^Tg>0$. And from Lemma 2.1 (1) we can easily obtain that $\tau_m < \tau_{cp}$ and
\begin{equation}\label{tauastP3a}
\tau_{\ast}=\left\{\begin{array}{l}
\mbox{arg\:min}\{\rho (-\tau_{\Delta}), \rho (\tau_u)\}, \ \    \mbox{if} \ \ \tau_m < \tau_{cp}\leq \tau_u,\\
\mbox{arg\:min}\{\rho (-\tau_{\Delta}), \rho (\tau_{cp})\}, \   \mbox{if} \ \ \tau_u < \tau_{cp}< \tau_{\Delta},\\
\mbox{arg\:min}\{\rho (-\tau_{\Delta}), \rho (\tau_{\Delta})\}, \ \   \mbox{if} \ \ \tau_{cp}\geq \tau_{\Delta}.
\end{array}\right.
\end{equation}

(1) If $\tau_m < \tau_{cp}\leq \tau_u$, then from  \eqref{rhotau} we have
\begin{eqnarray} \label{tauu-D}
\rho (\tau_u)- \rho (-\tau_\Delta)
= \frac{\Delta^2\|a\|^2a_{\Delta}+2\Delta \|a\|b_{\Delta} + c_{\Delta}}{2\varepsilon_0^2\|a\|^4(1+ \Delta \|a\|)^2 },
\end{eqnarray}
 where
 \begin{eqnarray}
 && a_{\Delta} = (1+2\varepsilon_0) a^TBa -2\varepsilon_0\|a\|^2 a^Tg, \\
 && b_{\Delta} = (1+\varepsilon_0)^2 a^TBa -(2+\varepsilon_0)\varepsilon_0\|a\|^2 a^Tg, \\
 && c_{\Delta} = (1+\varepsilon_0)^2 a^TBa -2(1+\varepsilon_0)\varepsilon_0\|a\|^2 a^Tg.
 \end{eqnarray}
 Because $\tau_{cp}\leq \tau_u$, then from \eqref{tauDdu} and \eqref{tau1} we have
  \begin{eqnarray}
- \varepsilon_0\|a\|^2 a^Tg  \leq  -(1+\varepsilon_0) a^TBa.
 \end{eqnarray}
 And then
  \begin{eqnarray}
 && a_{\Delta} \leq - a^TBa <0, \\
 && b_{\Delta} \leq -(1+\varepsilon_0) a^TBa <0, \\
 &&  c_{\Delta} \leq -(1+\varepsilon_0)^2 a^TBa <0.
 \end{eqnarray}
Combining with \eqref{tauu-D}, then
\begin{eqnarray*}
\rho (\tau_u)< \rho (-\tau_\Delta).
\end{eqnarray*}
Hence, $\tau_{\ast} = \tau_u$.

(2) If $\tau_u < \tau_{cp}< \tau_{\Delta}$, then from  \eqref{rhotau} we have
\begin{eqnarray} \label{tau1-D}
   \rho (\tau_{cp})- \rho (-\tau_\Delta)  
   = -\frac{ a_{\tau}^2\Delta^2 -2 a_{\tau}a^Tg\|a\|\Delta + \|a\|^2(a^Tg)^2} {2\|a\|^2(1+ \Delta \|a\|)^2 a^TBa}.
\end{eqnarray}
Because $a_{\tau}<0, a^Tg>0$, then
\begin{eqnarray*}
\rho (\tau_{cp})< \rho (-\tau_\Delta).
\end{eqnarray*}
Therefore, $\tau_{\ast} = \tau_{cp}$.
The theorem is proved.
 \end{proof}

 \begin{theorem}   \  If  $ a_{\tau} \geq 0$  and $a^Tg\neq0$, then the optimal solution of the subproblem  (\text{P3}) is \begin{equation}\label{tauastP3-2}
\tau_{\ast}=\left\{\begin{array}{l}
-\tau_{\Delta}, \ \ \ \ \ \ \ \ \ \ \  \ \ \ \ \ \ \ \mbox{if} \ \ a_{\tau} = 0,\\
\max\{-\tau_{\Delta}, \tau_{cp}\}, \ \,   \mbox{if} \ \ a_{\tau} > 0, a^Tg>0,\\
\min\{\tau_{cp}, \tau_d\}, \ \ \ \ \     \mbox{if} \ \ a_{\tau} > 0, a^Tg<0.
\end{array}\right.
\end{equation}
  \end{theorem}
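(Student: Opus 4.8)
The plan is to split the feasible set of (P3) into its two intervals, recognize that on the left one the problem coincides with the already-solved (P2), and show that the extra right interval can never produce a value below $\rho(-\tau_{\Delta})$. By \eqref{Omega-3} the feasible set is $\Omega=[-\tau_{\Delta},\tau_d]\cup[\tau_u,\tau_{\Delta}]$ with $\tau_d<\tau_m<\tau_u\le\tau_{\Delta}$; on each of the two closed intervals the pole $\tau=\tau_m$ of $\rho$ is excluded, so $\rho$ is continuous there and attains a minimum. I will locate the minimizer on each interval and then compare the two minimal values.

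First I treat $[-\tau_{\Delta},\tau_d]$. There the constraint set is exactly that of (P2), and the monotonicity argument used for Theorem 2.3 relies only on $\tau_d<\tau_m$ together with Lemma 2.1, not on where $\tau_{\Delta}$ sits relative to $\tau_u$; replaying it here shows that $\rho$ restricted to $[-\tau_{\Delta},\tau_d]$ attains its minimum at the point $\hat\tau$ equal to the right-hand side of \eqref{tauastP3-2}: $\hat\tau=-\tau_{\Delta}$ when $a_{\tau}=0$, $\hat\tau=\max\{-\tau_{\Delta},\tau_{cp}\}$ when $a_{\tau}>0$ and $a^Tg>0$, and $\hat\tau=\min\{\tau_{cp},\tau_d\}$ when $a_{\tau}>0$ and $a^Tg<0$. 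In each case $\hat\tau\in[-\tau_{\Delta},\tau_d]$, so in particular $\rho(\hat\tau)\le\rho(-\tau_{\Delta})$.

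Next I treat $[\tau_u,\tau_{\Delta}]$. Because $a_{\tau}\ge0$ (so $\tau_{cp}<\tau_m<\tau_u$ when $a_{\tau}>0$), Lemma 2.1(2)--(3) gives that $\rho$ is monotonically decreasing for $\tau>\tau_m$, hence on all of $[\tau_u,\tau_{\Delta}]$; thus $\min_{[\tau_u,\tau_{\Delta}]}\rho=\rho(\tau_{\Delta})$. It therefore suffices to prove $\rho(-\tau_{\Delta})<\rho(\tau_{\Delta})$, for then $\rho(\hat\tau)\le\rho(-\tau_{\Delta})<\rho(\tau_{\Delta})$ and the global minimum over $\Omega$ is attained at $\hat\tau$, which is the claim. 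Writing $t=\Delta\|a\|>1$, a direct computation from \eqref{rhotau} yields
\[
\rho(-\tau_{\Delta})-\rho(\tau_{\Delta})=\frac{2t}{\|a\|^{2}(t^{2}-1)}\Bigl(a^Tg-\frac{t^{2}\,a^TBa}{\|a\|^{2}(t^{2}-1)}\Bigr),
\]
where the prefactor is positive; and since $a_{\tau}\ge0$ means $a^Tg\le a^TBa/\|a\|^{2}$ while $a^TBa/\|a\|^{2}<t^{2}a^TBa/(\|a\|^{2}(t^{2}-1))$ (using $a^TBa>0$ because $B$ is positive definite and $t^{2}/(t^{2}-1)>1$), the bracket is negative, so $\rho(-\tau_{\Delta})<\rho(\tau_{\Delta})$.

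I expect the main obstacle to be that last step: obtaining the compact closed form for $\rho(-\tau_{\Delta})-\rho(\tau_{\Delta})$ from \eqref{rhotau} and noticing that the hypothesis $a_{\tau}\ge0$ is precisely what pins down its sign --- equivalently, that the extra branch $[\tau_u,\tau_{\Delta}]$ of the feasible set of (P3) can never beat the left endpoint $-\tau_{\Delta}$. Everything else is routine case-checking with Lemma 2.1, exactly as in the proofs of Theorem 2.2 and Theorem 2.3.
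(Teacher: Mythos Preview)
Your proof is correct, and the key computation --- comparing $\rho(-\tau_{\Delta})$ with $\rho(\tau_{\Delta})$ --- is the same one the paper carries out (their display \eqref{tauDD} is algebraically equivalent to your formula in $t=\Delta\|a\|$). However, your organization is genuinely tidier than the paper's. The paper runs three separate case analyses on $\Omega$, each time listing the candidate minimizers from Lemma~2.1 and then eliminating $\tau_{\Delta}$; in cases $a_\tau=0$ and $a_\tau>0,\ a^Tg>0$ it uses \eqref{tauDD} just as you do, but in the case $a_\tau>0,\ a^Tg<0$ it switches tactics entirely, arguing instead that $\rho(\tau_{\Delta})>0$ while $\rho(\hat\tau)<\rho(0)=0$. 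You avoid that asymmetry: by observing that the left interval $[-\tau_{\Delta},\tau_d]$ is exactly the (P2) feasible set, you reuse Theorem~2.3 wholesale, and then a single inequality $\rho(-\tau_{\Delta})<\rho(\tau_{\Delta})$ (valid for every $a_\tau\ge0$, as your bound $a^Tg\le a^TBa/\|a\|^2 < t^2a^TBa/(\|a\|^2(t^2-1))$ shows) disposes of the right interval in all three cases at once. What your approach buys is uniformity and the ability to recycle Theorem~2.3; what the paper's case-(3) argument buys is that it never needs $\rho(-\tau_{\Delta})$ at all, only the sign of $\rho(\tau_{\Delta})$, which is perhaps conceptually simpler in that one case but breaks the symmetry of the overall proof.
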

 \begin{proof}
(1) If  $ a_{\tau} = 0$  then $a^Tg >0$. Combining \eqref{Omega-3} and Lemma 2.1 (2), we know that
  \begin{equation}\label{tauastP3-21}
\tau_{\ast}= \mbox{arg\:min}\{\rho (-\tau_{\Delta}), \rho (\tau_{\Delta})\}.
\end{equation}
However, by calculation we have
\begin{eqnarray}
 && \rho (\tau_\Delta)- \rho (-\tau_\Delta) \nonumber\\
 && = \frac{2\Delta a^Tg}{\|a\|(1- \Delta^2 \|a\|^2)}+ \frac{ 2\Delta^3 a^TBa}{\|a\|(1- \Delta^2 \|a\|^2)^2} \label{tauDD3}\\
 && = \frac{ 2\Delta(\Delta^2a_{\tau}+ a^Tg)}{\|a\|(1- \Delta^2 \|a\|^2)^2}.\label{tauDD}
\end{eqnarray}
For $ a_{\tau} = 0$  and $a^Tg>0$, then
\begin{eqnarray*}
\rho (\tau_\Delta)>\rho (-\tau_\Delta).
\end{eqnarray*}
 Hence, $\tau_{\ast} = -\tau_{\Delta}$ and \eqref{tauastP3-2} holds.

(2) If $a_{\tau} > 0, a^Tg>0$ then $\tau_{cp}<0$. Combining \eqref{Omega-3} and Lemma 2.1 (3), we know that the optimal solution of the subproblem  (\text{P3}) is
\begin{equation}\label{tauastP3-4}
\tau_{\ast}=\left\{\begin{array}{l}
\mbox{arg\:min}\{\rho (-\tau_{\Delta}), \rho (\tau_{\Delta})\}, \ \    \mbox{if} \ \   \tau_{cp}\leq -\tau_{\Delta},\\
\mbox{arg\:min}\{\rho (\tau_{cp}), \rho (\tau_{\Delta})\}, \ \ \ \ \mbox{if} \ \ -\tau_{\Delta} < \tau_{cp}< 0.
\end{array}\right.
\end{equation}
For $a_{\tau} > 0, a^Tg>0$, then from \eqref{tauDD} we note that
\begin{eqnarray}\label{tDD1}
\rho (\tau_\Delta)>\rho (-\tau_\Delta).
\end{eqnarray}
If $-\tau_{\Delta} < \tau_{cp}< 0$, then from Lemma 2.1 (3) we know that
\begin{eqnarray*}
\rho (-\tau_\Delta)>\rho (\tau_{cp}).
\end{eqnarray*}
Thus,
\begin{equation}\label{tauastP3-4}
\tau_{\ast}=\left\{\begin{array}{l}
-\tau_{\Delta}, \ \    \mbox{if} \ \   \tau_{cp}\leq -\tau_{\Delta},\\
\tau_{cp}, \ \ \ \ \mbox{if} \ \ -\tau_{\Delta} < \tau_{cp}< 0.
\end{array}\right.
\end{equation}
Then, \eqref{tauastP3-2} holds.

(3) If  $ a_{\tau} > 0$, $a^Tg<0$, then from \eqref{tau1} and \eqref{tau1m} we can get $0 <\tau_{cp}< \tau_m$.
 Combining \eqref{Omega-3} and Lemma 2.1 (3), we know that the optimal solution of the subproblem  (\text{P3}) is
\begin{equation}\label{tauastP3-5}
\tau_{\ast}=\left\{\begin{array}{l}
\mbox{arg\:min}\{\rho(\tau_{cp}), \rho (\tau_{\Delta})\}, \ \   \mbox{if} \ \   0 < \tau_{cp}< \tau_d,\\
\mbox{arg\:min}\{\rho (\tau_d), \rho (\tau_{\Delta})\}, \ \ \ \ \mbox{if} \ \ \tau_d \leq \tau_{cp}< \tau_m.
\end{array}\right.
\end{equation}
 For the subproblem  (\text{P3}),  we note that $1-\Delta \|a\| \leq -\varepsilon_0$. Because of  $a^Tg<0$, then
\begin{eqnarray} \label{tauDD6}
\rho (\tau_\Delta)= \frac{\Delta a^Tg}{\|a\|(1- \Delta \|a\|)} + \frac{ \Delta^2 a^TBa}{2\|a\|^2(1- \Delta \|a\|)^2}>0.
\end{eqnarray}
However, {\color{red}from $\rho(0) = 0$} and Lemma 2.1 (3) we can obtain  that if $0 < \tau_{cp}< \tau_d$ then $\rho (\tau_{cp})< 0$;
If $\tau_d \leq \tau_{cp}< \tau_m$ then $\rho (\tau_d)< 0$ holds too.
Therefore, it follows that
\begin{equation}\label{tauastP3-5}
\tau_{\ast}=\left\{\begin{array}{l}
\tau_{cp}, \ \    \mbox{if} \ \   0 < \tau_{cp}< \tau_d,\\
\tau_d, \ \ \ \ \mbox{if} \ \ \tau_d \leq \tau_{cp}< \tau_m.
\end{array}\right.
\end{equation}
Then,  \eqref{tauastP3-2} holds too and the theorem  is proved.
 \end{proof}

 If $\tau_{\ast}=\tau_{\Delta}$, then from \eqref{tauDdu} we know that $\|\tau_{\ast}a\|=\Delta$. Therefore, for this case we set $s_{\ast}=\tau_{\ast}a$ and  exit the calculation of subproblem. Otherwise, we know that $\tau_{\ast}a$ is inside the trust region. Then, we should carry out the  calculation of the second stage below.

We set $s=\tau_{\ast}a+y$ and substitute it into $\phi_k (s) $. And then {\color{red}the} subproblem \eqref{phis}-\eqref{sDelta} becomes
\begin{eqnarray}
&&   \min  \ \ \   \displaystyle \psi (y)= \frac{g^T( \tau_{\ast}a+y)}{1- \tau_{\ast}a^Ta}
   + \frac{( \tau_{\ast}a+y)^TB( \tau_{\ast}a+y)}{2(1- \tau_{\ast}a^Ta)^2}, \label{psiy}\\
&& \mbox{s.t.}  \ \ \ \ \  \| y \| \le  \tilde{\Delta}, \ \ a^T y=0, \label{yDa}
\end{eqnarray}
where
\begin{eqnarray}\label{tDelta}
\tilde{\Delta}=\sqrt{\Delta^2-(\tau_{\ast})^2\|a\|^2}.
\end{eqnarray}
In order to remove the   equality constraint in \eqref{yDa}, we  use the null space {\color{red}technique}. That is, for $a\neq 0$ then there exist  $n-1$ mutually orthogonal unit vectors $q, q, \cdots,q_{n-1}$ orthogonal to the parameter vector $a$.
Set $Q=[q, q, \cdots,q_{n-1}]$ and $y=Qu$, where $u \in R^{n-1}$. Then \eqref{psiy}-\eqref{yDa} can be   simplified as following subproblem
\begin{eqnarray}
&&   \min  \ \ \  \tilde{\psi} (u)= \tilde{g}^Tu + \frac{1}{2}u^T\tilde{B}u, \label{tpsiu}\\
&& \mbox{s.t.} \ \ \ \ \  \| u \| \le  \tilde{\Delta},\label{utD}
\end{eqnarray}
where
\begin{eqnarray} \label{tgB}
\tilde{g}= \frac{Q^Tg}{1- \tau_{\ast}a^Ta}+ \frac{\tau_{\ast}Q^TBa}{(1- \tau_{\ast}a^Ta)^2}, \ \
\tilde{B}= \frac{Q^TBQ}{(1- \tau_{\ast}a^Ta)^2}
\end{eqnarray}
Set $g_k=\tilde{g}$, $B_k=\tilde{B}$ and $\Delta_k=\tilde{\Delta}$.
By {\color{red}Algorithm} 1.1, we can obtain the  solution $u_{\ast}$ of the  subproblem \eqref{tpsiu}-\eqref{utD}.
Then $y_{\ast}=Qu_{\ast}$ and  $s_{\ast}=\tau_{\ast}a +y_{\ast}$. Thus, the subproblem \eqref{phis}-\eqref{sDelta} is solved approximately.

Now we could give  the alternating direction search method for solving the conic trust region subproblem \eqref{phis}-\eqref{sDelta} as following.

\

\textbf{ {\color{red}Algorithm} 2.1}

Given $\varepsilon_0, a, g, B$ and $\Delta$.

Step 1.\quad If $a^Tg = 0$, then $\tau_{\ast} = 0$. Set $a=0$ and use {\color{red}Algorithm} 1.1 to get $s_k$, stop.

Step 2.\quad Compute $\tau_{cp},  \tau_d, \tau_u, \tau_{\Delta}$ and $a_{\tau}$ by \eqref{tauDdu}, \eqref{ataum} and \eqref{tau1}.

Step 3.\quad Compute  $1-\Delta \|a\|$.

Step 4.\quad Solve the  subproblem  \eqref{rhotau}-\eqref{tauome}.

\begin{adjustwidth}{1.85cm}{0cm}   Step 4.1.\quad  If $1-\Delta \|a\| \geq \varepsilon_0$, then  calculate $\tau_{\ast}$ by \eqref{tauastP1};
 If $|1- \Delta \|a\| \,  |<  \varepsilon_0$, then  calculate $\tau_{\ast}$ by \eqref{tauastP2};
   \end{adjustwidth}
\begin{adjustwidth}{3.45 cm}{0cm}
  Otherwise, go to step 4.2.
  \end{adjustwidth}
\begin{adjustwidth}{1.85cm}{0cm}
Step 4.2.\quad  If  {\color{red}$a_{\tau}<0$} then  calculate $\tau_{\ast}$ by \eqref{tauastP3-1}; If  {\color{red}$a_{\tau}\geq0$} then  calculate $\tau_{\ast}$ by \eqref{tauastP3-2};
 \end{adjustwidth}

{ \hangafter=1\setlength{\hangindent}{2em}Step 5.\quad If $\tau_{\ast}=\pm\tau_{\Delta}$, then $s_k=\pm\tau_{\Delta}a$, and stop. Otherwise, compute $Q$, $\tilde{\Delta}$, $\tilde{g}$ and $\tilde{B}$ by \eqref{tDelta} and \eqref{tgB}.}

Step 6.\quad Set $g_k=\tilde{g}$, $B_k=\tilde{B}$ and $\Delta_k=\tilde{\Delta}$.
Then solve the subproblem \eqref{tpsiu}-\eqref{utD} by Algorithm 1.1 to get $u_{\ast}$.

Step 7.\quad  Set  $y_{\ast}=Qu_{\ast}$ and $s_k=\tau_{\ast}a +y_{\ast}$, and stop.

\

In order to discuss the lower bound of predicted reduction in each iteration, we define the  following predicted reduction.
\begin{eqnarray}
&& \mbox{pred}(s) = \phi(0) - \phi(s), \ \ \mbox{pred}_1(\tau) = \rho(0) - \rho(\tau) \label{predstau}\\
&& \mbox{pred}_2(y) = \psi(0) - \psi(y), \ \  \mbox{pred}_3(u) = \tilde{\psi}(0) - \tilde{\psi}(u)\label{predyu}
\end{eqnarray}
Now we should prove the following theorem to guarantee the global convergence of the algorithm proposed in the next section.

\begin{theorem}
Under the same conditions as Lemma 2.1. If $s_k = \pm \tau_{\Delta}a$  are obtained by Steps 5
 in  Algorithm 2.1, then there exists a positive constant $c_1$ such that
\begin{equation}\label{pred-skD}
\mbox{pred}({s_ k})\geq \frac{1}{2} c_1 \Delta\| g \|.
\end{equation}
\end{theorem}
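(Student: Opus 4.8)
\section*{Proof proposal for Theorem 2.6}

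The plan is to reduce the claim to the one–dimensional estimate already prepared in Lemma 2.1 and Theorems 2.1–2.5. Since Step~5 returns $s_k=\tau_\ast a$ with $\tau_\ast=\pm\tau_\Delta$, substituting $s=\tau a$ into $\phi$ gives $\phi(\tau a)=\rho(\tau)$ and $\phi(0)=\rho(0)=0$, hence $\mathrm{pred}(s_k)=\mathrm{pred}_1(\tau_\ast)=-\rho(\tau_\ast)$. By Theorem~2.1, $\tau_\ast=\pm\tau_\Delta\neq0$ forces $a^Tg\neq0$, so one of Theorems 2.2–2.5 applies. Reading those statements against the monotonicity picture of Lemma~2.1, the only configurations whose minimizer lands on $\pm\tau_\Delta$ are: \textbf{(A)} $\tau_\ast=-\tau_\Delta$ with $a^Tg>0$ (this absorbs the subcases $a_\tau\le0$, and $a_\tau>0,\ a^Tg>0,\ \tau_{cp}\le-\tau_\Delta$); \textbf{(B)} $\tau_\ast=\tau_\Delta$ in $(\mathrm{P1})$ with $a_\tau>0,\ a^Tg<0,\ \tau_{cp}\ge\tau_\Delta$; and \textbf{(C)} $\tau_\ast\in\{\pm\tau_\Delta\}$ chosen as $\mathrm{arg\,min}\{\rho(-\tau_\Delta),\rho(\tau_\Delta)\}$ in $(\mathrm{P3})$ with $a_\tau<0,\ \tau_{cp}\ge\tau_\Delta$ (Theorem~2.4). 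The first step of the write–up is to verify that this list is exhaustive.

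The core is a single uniform lower bound, valid in all three cases:
\begin{equation*}
\mathrm{pred}(s_k)=-\rho(\tau_\ast)\ \ge\ \frac{\tau_\Delta\,|a^Tg|}{2\,(1+\tau_\Delta\|a\|^2)}\ =\ \frac{\Delta\,|a^Tg|}{2\,\|a\|\,(1+\Delta\|a\|)} .
\end{equation*}
In case (A) I would expand $-\rho(-\tau_\Delta)=\dfrac{\tau_\Delta a^Tg}{1+\tau_\Delta\|a\|^2}-\dfrac{\tau_\Delta^2 a^TBa}{2(1+\tau_\Delta\|a\|^2)^2}$ and feed the curvature term the inequality defining the case: $a_\tau\le0$ gives $a^TBa\le\|a\|^2a^Tg$, whereas $\tau_{cp}\le-\tau_\Delta$ (with $a_\tau>0$) rearranges to $a^TBa\le\tau_\Delta^{-1}(1+\tau_\Delta\|a\|^2)a^Tg$; either way $\dfrac{\tau_\Delta^2 a^TBa}{2(1+\tau_\Delta\|a\|^2)^2}\le\dfrac{\tau_\Delta a^Tg}{2(1+\tau_\Delta\|a\|^2)}$, so the favourable linear term dominates at least half the quadratic term and the claimed bound survives. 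Case (B) is the mirror computation: in $(\mathrm{P1})$ one has $1-\tau_\Delta\|a\|^2\in(0,1)$, the condition $\tau_{cp}\ge\tau_\Delta$ rearranges to $|a^Tg|\ge\tau_\Delta a^TBa/(1-\tau_\Delta\|a\|^2)$, and the same ``dominate half'' trick yields $-\rho(\tau_\Delta)\ge\dfrac{\tau_\Delta|a^Tg|}{2(1-\tau_\Delta\|a\|^2)}\ge\dfrac{\tau_\Delta|a^Tg|}{2}$. In case (C) it is enough to note $\mathrm{pred}(s_k)=\max\{-\rho(-\tau_\Delta),-\rho(\tau_\Delta)\}\ge-\rho(-\tau_\Delta)$ and repeat the case-(A) estimate, since $a_\tau<0$ again gives $a^TBa<\|a\|^2a^Tg$. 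Throughout, the leftover positive curvature contribution is simply discarded using $B\succ0$, the standing hypothesis of Lemma~2.1.

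With the uniform bound in hand, the conclusion \eqref{pred-skD} follows by invoking the standing boundedness data of the Section~3 analysis — $\|a_k\|\le A$, $\Delta_k\le\bar\Delta$ — together with the descent property of the horizon vector $a_k$ (recalled in Section~2, $a_k$ is taken as a descent direction such as $g_k$, for which $|a_k^Tg_k|=\|a_k\|\,\|g_k\|$; more generally one uses $|a_k^Tg_k|\ge\bar c\,\|a_k\|\,\|g_k\|$). Replacing $|a^Tg|$ by $\bar c\,\|a\|\,\|g\|$ and $1+\Delta\|a\|$ by $1+\bar\Delta A$ gives $\mathrm{pred}(s_k)\ge\tfrac12 c_1\Delta\|g\|$ with $c_1=\bar c/(1+\bar\Delta A)$.

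I expect the main obstacle to be the bookkeeping in the middle step: one must (i) be certain the enumeration of configurations producing $s_k=\pm\tau_\Delta a$ is complete, which requires matching each branch of Theorems 2.2–2.5 against Lemma~2.1's sign chart, and (ii) control the denominators $1\pm\tau_\Delta\|a\|^2$, which in $(\mathrm{P2})$ and $(\mathrm{P3})$ are bounded away from $0$ and $\infty$ only through $\varepsilon_0$ and $\Delta\|a\|$. The device that rescues (ii) is never to estimate $a^TBa$ crudely, but always to substitute the case-specific inequality ($a_\tau\le0$, or $\tau_{cp}\ge\tau_\Delta$, or $\tau_{cp}\le-\tau_\Delta$), which is exactly calibrated so that the surviving linear part carries the clean factor $\tau_\Delta|a^Tg|$.
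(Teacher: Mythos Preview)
Your reduction to $\mathrm{pred}(s_k)=-\rho(\tau_\ast)$ and your ``dominate half'' device are exactly what the paper does: the paper also isolates the linear term and controls the quadratic one via the active inequality ($a_\tau\le 0$ or $\tau_{cp}\gtrless\pm\tau_\Delta$), arriving at the same intermediate bound $\mathrm{pred}(s_k)\ge \dfrac{\Delta\,|a^Tg|}{2\|a\|(1+\Delta\|a\|)}$ in the $-\tau_\Delta$ cases and $\dfrac{\Delta\,|a^Tg|}{2\|a\|\,|1-\Delta\|a\||}$ in the $+\tau_\Delta$ cases. Your case list (A)--(C) is complete and even cleaner than the paper's three-way split by subproblem type.

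The substantive difference is in the last step. You absorb the factor $(1+\Delta\|a\|)^{-1}$ by invoking the Section~3 bounds $\|a_k\|\le A$ and $\Delta_k\le\bar\Delta$, producing $c_1=\bar c/(1+\bar\Delta A)$. The paper instead exploits the feasibility structure of the subproblems: in $(\mathrm{P1})$ one has $1+\Delta\|a\|\le 2-\varepsilon_0$, in $(\mathrm{P2})$ one has $1+\Delta\|a\|<2+\varepsilon_0$, so the denominator is controlled \emph{without} any boundedness assumption on $\|a\|$ or $\Delta$; and in $(\mathrm{P3})$ (your case~(C)) the paper does not bound $-\rho(-\tau_\Delta)$ directly but uses $\mathrm{pred}_1(-\tau_\Delta)\ge\mathrm{pred}_1(\tau_\Delta)$ and then the case-(1) estimate, whose denominator is $|1-\Delta\|a\||$ rather than $1+\Delta\|a\|$. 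The net effect is that the paper's constant is $c_1=\epsilon/(2+\varepsilon_0)$, depending only on $\epsilon=|a^Tg|/(\|a\|\|g\|)$ and the fixed $\varepsilon_0$.

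This matters because the theorem is stated ``under the same conditions as Lemma~2.1,'' which are merely $a\ne 0$ and $B\succ 0$; the bounds $\|a_k\|\le A$ and $\Delta_k\le\bar\Delta$ you import are Assumption~3.1 material, not hypotheses here. So as written your final step is a gap relative to the stated result. It is easily repaired: split cases~(A) and~(C) further according to whether you are in $(\mathrm{P1})$, $(\mathrm{P2})$, or $(\mathrm{P3})$, bound $1+\Delta\|a\|$ by $2+\varepsilon_0$ in the first two, and in $(\mathrm{P3})$ use the comparison $\mathrm{pred}(s_k)\ge -\rho(\tau_\Delta)$ together with your case-(B)-style estimate on the $+\tau_\Delta$ side. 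Then your constant will match the paper's and no longer rely on Section~3 data.
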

\begin{proof}
(1) \ \ If $s_k = \tau_{\Delta}a$, then  we know that $\tau_{\ast}=\tau_{\Delta}$.
By computation, we have
\begin{eqnarray}
&&\mbox{pred}(s_k)=\mbox{pred}_1(\tau_{\Delta}) = -\rho(\tau_{\Delta})\nonumber \\
&&=\frac{-\Delta(\Delta a_{\tau}+2 \| a \|  a^Tg -\Delta \| a \|^2a^Tg)}{2\| a \|^2(1-\Delta \| a \| )^2}, \nonumber
\end{eqnarray}
where   $\tau_{\Delta}$ is generated in two cases as defined in  \eqref{tauastP1} and \eqref{tauastP3-1}.
In both cases, we can find $\tau_{\Delta}\leq\tau_{cp}$ and
\begin{equation}\label{Daa0}
 \Delta a_{\tau}+  \| a \|  a^Tg \leq 0.
\end{equation}
Then
\begin{eqnarray}\label{predtauD1}
\mbox{pred}(s_k)=\mbox{pred}_1(\tau_{\Delta}) \geq\frac{-\Delta a^Tg}{2\| a \|(1-\Delta \| a \| )}.
\end{eqnarray}

 {\color{red}(1a)} \ \ For  $1-\Delta \|a\| \geq \varepsilon_0$, then from \eqref{tauastP1} we know that  $a_{\tau} > 0$  and $ a^Tg<0$.
Combining with \eqref{Daa0} and \eqref{predtauD1} , we have
\begin{eqnarray}\label{predtauD1-2}
 \mbox{pred}(s_k)=\mbox{pred}_1(\tau_{\Delta})\geq\frac{\epsilon\Delta\| g \|}{2},
\end{eqnarray}
where
\begin{eqnarray}\label{epsiag}
\epsilon = \frac{|a^Tg|}{\| a \|\| g \|}.
\end{eqnarray}

 {\color{red}(1b)} \ \ For  $1-\Delta \|a\| \leq -\varepsilon_0$, then from \eqref{tauastP3-1} we know that  $a_{\tau} < 0$  and $ a^Tg>0$.
Because of $1-\Delta \|a\| < 0$ and $a^Tg>0$, then from \eqref{predtauD1} we also have \eqref{predtauD1-2} holds.

(2) \ \ If $s_k = -\tau_{\Delta}a$, then
\begin{eqnarray}\label{pred-tauD1}
&&\mbox{pred}(s_k)=\mbox{pred}_1(-\tau_{\Delta})= -\rho(-\tau_{\Delta}) \nonumber\\
&&=\frac{\Delta(- a_{\tau}\Delta +2 a^Tg \| a \|  + a^Tg\| a \|^2\Delta)}{2\| a \|^2(1+\Delta \| a \| )^2}.
\end{eqnarray}
where  $-\tau_{\Delta}$ is generated in the following  three cases as defined in  \eqref{tauastP1}-\eqref{tauastP3-1} and \eqref{tauastP3-2}.

 {\color{red}(2a)} \ \ For $1-\Delta \|a\| \geq \varepsilon_0$, then $1\leq 1+\Delta \|a\| \leq 2-\varepsilon_0$.

From \eqref{tauastP1}, we know that if $a_{\tau} \leq 0$ then $a^Tg>0$. Thus,
\begin{eqnarray}\label{-atauag0}
- a_{\tau}\Delta + a^Tg \| a \|\geq 0.
\end{eqnarray}
And then, from \eqref{pred-tauD1} we know
\begin{eqnarray}\label{pred-tauD21}
&&\mbox{pred}(s_k)=\mbox{pred}_1(-\tau_{\Delta})  \nonumber\\
&&\geq \frac{\Delta a^Tg }{2\| a \|(1+\Delta \| a \| )} \geq \frac{\epsilon\Delta  \| g \| }{2(2-\varepsilon_0 )}.
\end{eqnarray}

On the other hand, if $a_{\tau} > 0, a^Tg>0$ then $-\tau_{\Delta}\geq\tau_{cp}$.
Then from \eqref{tauDdu} and \eqref{tau1} we have \eqref{-atauag0} holds too. It follows that \eqref{pred-tauD21} holds.

 {\color{red}(2b)} \ \ For  $|1-\Delta \|a\| \, | <  \varepsilon_0$, then $2- \varepsilon_0 < 1+\Delta \|a\| < 2+\varepsilon_0$.

Combining with  \eqref{tauastP2}, we can   prove that  \eqref{-atauag0} holds by the same way and
\begin{eqnarray}\label{pred-tauD22}
&&\mbox{pred}(s_k)=\mbox{pred}_1(-\tau_{\Delta})  \nonumber\\
&&\geq \frac{\Delta a^Tg }{2\| a \|(1+\Delta \| a \| )} \geq \frac{\epsilon\Delta  \| g \| }{2(2+\varepsilon_0 )}.
\end{eqnarray}

 {\color{red}(2c)} \ \ For  $1-\Delta \|a\| \,  \leq  -\varepsilon_0$, then $ 1+\Delta \|a\| \geq 2+\varepsilon_0$.

From \eqref{tauastP3-1}, we know that if $a_{\tau} < 0$, then
\begin{eqnarray*}
\rho (-\tau_{\Delta})\leq \rho (\tau_{\Delta}).
\end{eqnarray*}
By the definition of $\mbox{pred}_1 (\tau)$ in the \eqref{predstau}, we get
\begin{eqnarray*}
\mbox{pred}_1 (-\tau_{\Delta})\geq \mbox{pred}_1 (\tau_{\Delta}).
\end{eqnarray*}
Combining with the proof of the above case  (1a) in this theorem, we have
\begin{eqnarray}\label{pred-tauD2-3}
&&\mbox{pred}(s_k) = \mbox{pred}_1(-\tau_{\Delta}) \nonumber\\
&& \geq \mbox{pred}_1(\tau_{\Delta}) \geq \frac{\epsilon\Delta\| g \|}{2}.
\end{eqnarray}

Therefore, the theorem follows from \eqref{predtauD1-2} and \eqref{pred-tauD21}-\eqref{pred-tauD2-3} with
\begin{eqnarray}\label{c1}
c_1 =\min\left\{\epsilon, \frac{ \epsilon }{ 2-\varepsilon_0}, \frac{ \epsilon }{ 2+\varepsilon_0}\right\}=\frac{ \epsilon }{ 2+\varepsilon_0}.
\end{eqnarray}
\end{proof}

\begin{theorem}
Under the same conditions as Lemma 2.1. If $s_k$ is obtained from the above Algorithm 2.1, then there exists a positive constant $c_4$ such that
\begin{equation}\label{pred-sk}
\mbox{pred}({s_ k})\geq \frac{1}{2} c_4 \| g \|\min\left\{\Delta,\frac{1}{\| a \|}, \frac{\| g \|}{\| B \|}\right\}.
\end{equation}
\end{theorem}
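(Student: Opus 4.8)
The plan is to split the argument according to which branch of Algorithm~2.1 returns $s_k$ and to produce in each branch a lower bound of the advertised shape; the constant $c_4$ is then the minimum of the constants obtained, and it will depend only on $\varepsilon_0$ and on $\epsilon=|a^Tg|/(\|a\|\|g\|)$ from \eqref{epsiag}. If $a^Tg=0$, then Step~1 resets $a$ to $0$ and returns the dogleg point of the quadratic subproblem \eqref{quadratic}--\eqref{sDelta1} with data $g,B,\Delta$; since $B\succ 0$, the standard Cauchy/dogleg estimate for Algorithm~1.1 gives $\mbox{pred}(s_k)\ge\frac{1}{2}\|g\|\min\{\Delta,\|g\|/\|B\|\}$, which already exceeds $\frac{1}{2}\|g\|\min\{\Delta,1/\|a\|,\|g\|/\|B\|\}$ (the original $a$ being nonzero). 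If $s_k=\pm\tau_{\Delta}a$ is produced at Step~5, then the preceding theorem gives $\mbox{pred}(s_k)\ge\frac{1}{2}c_1\Delta\|g\|$, and since $\min\{\Delta,1/\|a\|,\|g\|/\|B\|\}\le\Delta$ this is at least $\frac{1}{2}c_1\|g\|\min\{\Delta,1/\|a\|,\|g\|/\|B\|\}$.

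There remains the case $a^Tg\neq0$ and $s_k=\tau_{\ast}a+y_{\ast}$ with $y_{\ast}=Qu_{\ast}$, returned at Step~7, in which the formulas of Theorems~2.2--2.5 show that $\tau_{\ast}$ is one of $\tau_{cp},\tau_d,\tau_u$. The device I would use is the additivity of the predicted reduction: since $\rho(\tau)=\phi(\tau a)$, $\psi(y)=\phi(\tau_{\ast}a+y)$, $\tilde{\psi}(u)=\psi(Qu)-\psi(0)$, and $\phi(0)=\rho(0)=\tilde{\psi}(0)=0$, the definitions \eqref{predstau}--\eqref{predyu} telescope to $\mbox{pred}(s_k)=\mbox{pred}_1(\tau_{\ast})+\mbox{pred}_3(u_{\ast})$. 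Because $\tilde{B}=Q^TBQ/(1-\tau_{\ast}a^Ta)^2\succ0$, Algorithm~1.1 returns $u_{\ast}$ with $\tilde{\psi}(u_{\ast})\le\tilde{\psi}(0)=0$ (the model decreases along the dogleg path), hence $\mbox{pred}_3(u_{\ast})\ge0$ and it suffices to bound $\mbox{pred}_1(\tau_{\ast})=-\rho(\tau_{\ast})$ from below; for this I would plug each candidate into \eqref{rhotau}. From $1-\tau_{cp}a^Ta=a^TBa/a_{\tau}$ (using \eqref{ataum} and \eqref{tau1}) one obtains the closed form $\rho(\tau_{cp})=-(a^Tg)^2/(2a^TBa)$, so $\mbox{pred}(s_k)\ge(a^Tg)^2/(2\|a\|^2\|B\|)=\frac{1}{2}\epsilon^2\|g\|\,(\|g\|/\|B\|)$. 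For $\tau_{\ast}=\tau_d$ one has $1-\tau_d a^Ta=\varepsilon_0$, and the hypotheses attached to this branch ($a_{\tau}>0$, $a^Tg<0$, $\tau_d\le\tau_{cp}$) force $a^TBa\le\varepsilon_0\|a\|^2|a^Tg|/(1-\varepsilon_0)$; substituting gives $\rho(\tau_d)\le-\tau_d|a^Tg|/(2\varepsilon_0)$, whence $\mbox{pred}(s_k)\ge(1-\varepsilon_0)|a^Tg|/(2\varepsilon_0\|a\|^2)=\frac{1}{2}\,\frac{(1-\varepsilon_0)\epsilon}{\varepsilon_0}\,\|g\|\,(1/\|a\|)$. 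The case $\tau_{\ast}=\tau_u$ is symmetric: $1-\tau_u a^Ta=-\varepsilon_0$, the branch hypotheses ($a_{\tau}<0$, $a^Tg>0$, $\tau_{cp}\le\tau_u$) give $a^TBa\le\varepsilon_0\|a\|^2 a^Tg/(1+\varepsilon_0)$, hence $\rho(\tau_u)\le-\tau_u a^Tg/(2\varepsilon_0)$ and $\mbox{pred}(s_k)\ge\frac{1}{2}\,\frac{(1+\varepsilon_0)\epsilon}{\varepsilon_0}\,\|g\|\,(1/\|a\|)$. In each of these three bounds the trailing factor is at least $\min\{\Delta,1/\|a\|,\|g\|/\|B\|\}$.

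Putting the branches together, the theorem follows with $c_4=\min\{1,\,c_1,\,\epsilon^2,\,(1-\varepsilon_0)\epsilon/\varepsilon_0,\,(1+\varepsilon_0)\epsilon/\varepsilon_0\}$, which (since $0<\varepsilon_0<1$ and $0<\epsilon\le1$) reduces to $\min\{c_1,\epsilon^2,(1-\varepsilon_0)\epsilon/\varepsilon_0\}$. I expect the real effort to be bookkeeping rather than a single hard estimate: for every admissible $\tau_{\ast}$ one has to read off from Theorems~2.2--2.5 the exact sign pattern of $a_{\tau}$ and $a^Tg$ and the precise barrier inequality that makes the evaluation of $\rho(\tau_{\ast})$ go through, and then check that no admissible value of $\tau_{\ast}$ has been missed. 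The observation that removes the only apparent obstacle --- that the second-stage radius $\tilde{\Delta}=\sqrt{\Delta^2-\tau_{\ast}^2\|a\|^2}$ may be arbitrarily small, so that a Cauchy-type lower bound on $\mbox{pred}_3(u_{\ast})$ would be useless --- is precisely that in every second-stage case the whole reduction is already controlled by $-\rho(\tau_{\ast})$, with $\mbox{pred}_3(u_{\ast})$ needed only to be nonnegative.
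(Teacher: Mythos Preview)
Your proposal is correct and follows essentially the same route as the paper's proof: the same three-way split (Step~1 quadratic case, Step~5 boundary case via the preceding theorem, Step~7 interior case), the same additive decomposition $\mbox{pred}(s_k)=\mbox{pred}_1(\tau_{\ast})+\mbox{pred}_3(u_{\ast})$ with only $\mbox{pred}_3(u_{\ast})\ge0$ used, and the same closed-form evaluations of $\rho$ at $\tau_{cp},\tau_d,\tau_u$ using the corresponding barrier inequalities. The only cosmetic differences are that the paper cites a generic constant $c_2\in(0,1]$ for the Algorithm~1.1 branch (you take $c_2=1$, which is valid for $B\succ0$), and the paper records a positive Cauchy-type bound on $\mbox{pred}_3(u_{\ast})$ before discarding it, whereas you observe directly that nonnegativity is all that is needed; your final constant $c_4=\min\{c_1,\epsilon^2,(1-\varepsilon_0)\epsilon/\varepsilon_0\}$ matches the paper's up to that $c_2$.
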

\begin{proof}
(1) \ \ If $s_k$ is obtained by Algorithm 1.1,  
then from  Nocedal and Wright  \cite{Nocedal2006} we have
\begin{equation}\label{predsqua}
\mbox{pred}(s_k) \geq \frac{1}{2} c_2 \| g \| \min\left\{ \Delta, \frac{\| g \|}{\| B \|} \right\},
\end{equation}
where $c_2 \in(0,1]$.

(2) \ \  {\color{red}If $s_k = \pm \tau_{\Delta}a$, then \eqref{pred-skD} holds.}

(3) \ \  $s_k =  \tau_{\ast}a+Qu_{\ast}$, where $\tau_{\ast} \neq \pm \tau_{\Delta}$.
Combining with \eqref{predstau}  and \eqref{predyu}, we have
\begin{equation*}
\mbox{pred}(s_k )=\mbox{pred}_1(\tau_{\ast})+\mbox{pred}_3(u_{\ast}).
\end{equation*}
Because of $u_{\ast}$ is obtained by Algorithm 1.1, then  from   \cite{Nocedal2006} we have
\begin{equation}\label{uast1}
\mbox{pred}_3(u_{\ast}) \geq \frac{1}{2} c_3\| \tilde{g} \| \min\left\{\tilde{\Delta}, \frac{\| \tilde{g} \|}{\| \tilde{B} \|} \right\},
\end{equation}
where $c_3 \in(0,1]$, $\tilde{\Delta}$, $\tilde{g}$  and $\tilde{B}$ as defined by \eqref{tDelta} and \eqref{tgB}.
Thus,
\begin{equation}\label{uast1}
\mbox{pred}(s_k ) \geq \mbox{pred}_1(\tau_{\ast}),
\end{equation}
where $\tau_{\ast}$ can be $\tau_{cp}, \tau_d$ or $\tau_u$.

(3a) \ \  If $\tau_{\ast} = \tau_{cp}$, then  from \eqref{uast1} we have
\begin{eqnarray}\label{predtau1}
&& \mbox{pred}(s_k)\geq   \mbox{pred}_1(\tau_{cp})\nonumber \\
&& = - \frac{\tau_{cp}a^Tg}{ 1-\tau_{cp}\| a \|^2}-\frac{\tau_{cp}^2 a^TBa}{2(1-\tau_{cp}\| a \|^2)^2} \nonumber \\
&& =\frac{(a^Tg)^2}{2a^TBa} = \frac{\epsilon^2\| g \|^2}{2\| B \|},
\end{eqnarray}
where  the second equality is from \eqref{tau1} and the last equality is from \eqref{epsiag}.

(3b) \ \ If  $\tau_{\ast}=\tau_d$, then
\begin{eqnarray*}\label{predtaud}
&& \mbox{pred}(s_k)\geq   \mbox{pred}_1(\tau_d) \nonumber \\
&& = - \frac{\tau_d a^Tg}{ 1-\tau_d\| a \|^2}-\frac{\tau_d^2 a^TBa}{2(1-\tau_d\| a \|^2)^2} \nonumber \\
&& =\frac{-(1-\varepsilon_0)[2\varepsilon_0\| a \|^2a^Tg+ (1-\varepsilon_0)a^TBa]}{2\varepsilon_0^2\| a \|^4}.
\end{eqnarray*}
 From \eqref{tauastP1}-\eqref{tauastP3-1} and \eqref{tauastP3-2},  we know that $\tau_d \leq \tau_{cp}$ and $a^Tg<0$.
For $\tau_d \leq \tau_{cp}$, then we have
\begin{equation*}\label{tiD0}
\varepsilon_0\| a \|^2a^Tg+ (1-\varepsilon_0)a^TBa \leq 0
\end{equation*}
and
\begin{eqnarray}\label{predtaud}
&& \mbox{pred}(s_k)\geq   \mbox{pred}_1(\tau_d) \nonumber \\
&& \geq \frac{-(1-\varepsilon_0)a^Tg }{2 \varepsilon_0\| a \|^2}
= \frac{ \epsilon(1-\varepsilon_0)\|g\| }{2 \varepsilon_0\| a \|},
\end{eqnarray}
where $0<\varepsilon_0<1$.

(3c) \ \ If  $\tau_{\ast}=\tau_u$, then
\begin{eqnarray*}\label{predtauu1}
&& \mbox{pred}(s_k)\geq   \mbox{pred}_1(\tau_u) \nonumber \\
&& = - \frac{\tau_u a^Tg}{ 1-\tau_u\| a \|^2}-\frac{\tau_u^2 a^TBa}{2(1-\tau_u\| a \|^2)^2} \nonumber \\
&& =\frac{(1+\varepsilon_0)[2\varepsilon_0\| a \|^2a^Tg- (1+\varepsilon_0)a^TBa]}{2\varepsilon_0^2\| a \|^4}.
\end{eqnarray*}
 From \eqref{tauastP1}-\eqref{tauastP3-1} and \eqref{tauastP3-2},  we know that $\tau_{cp}\leq \tau_u$, $a_\tau<0$ and $a^Tg>0$.
 For $\tau_{cp}\leq \tau_u$, then we have
\begin{equation*}\label{tiD0}
\varepsilon_0\| a \|^2a^Tg -  (1+\varepsilon_0)a^TBa \geq 0
\end{equation*}
and
\begin{eqnarray}\label{predtauu2}
&& \mbox{pred}(s_k)\geq   \mbox{pred}_1(\tau_u) \nonumber \\
&& \geq \frac{(1+\varepsilon_0)a^Tg }{2 \varepsilon_0\| a \|^2}
= \frac{ \epsilon(1+\varepsilon_0)\|g\| }{2 \varepsilon_0\| a \|}.
\end{eqnarray}

 Therefore, the theorem follows from \eqref{pred-skD}, \eqref{predsqua} and \eqref{predtau1}-\eqref{predtauu2} with
\begin{eqnarray}\label{c1}
c_4 =\min\left\{c_1,c_2,\epsilon^2, \frac{ \epsilon(1-\varepsilon_0) }{ \varepsilon_0}\right\}.
\end{eqnarray}
 \end{proof}

\section{The algorithm and its convergence}
\setcounter{equation}{0}

 In this section, we propose a  quasi-Newton  method with a conic model for unconstrained minimization and prove its convergence under some reasonable conditions. In order to solve the problem \eqref{f}, we approximate $f(x)$ with a conic model of the form
\begin{equation}\label{mk}
m_k(s)=f_k+\frac{ g_k^Ts}{ 1-a_k^Ts}+\frac{1}{2}\frac{ s^TB_ks}{(1-a_k^Ts)^2 },
\end{equation}
where $f_k=f(x_k),\  g_k=\nabla f(x_k)$,
$B_k\in R^{n\times n}$ and
 $a_k \in R^n$ are  parameter vectors.

The choice of the parameters $a_k$ and $B_k$ in \eqref{mk} can refer to \cite{Davidon1980,sc82,so82,zxz95,lu08}  and \cite{Powell1978, Al-Baali2014} respectively.  We set
\begin{eqnarray}
s_{k-1}=x_{k}-x_{k-1},
\end{eqnarray}
\begin{equation}\label{beta}
    \beta=(f_k-f_{k-1})^2-(g_{k-1}^Ts_{k-1})(g_k^Ts_{k-1}),
\end{equation}
If $\beta>0$, then
\begin{equation}\label{betak}
    \beta_k = \frac{f_{k-1}-f_k+\sqrt{\beta}}{-g_{k-1}^Ts_{k-1}};
\end{equation}
otherwise, $\beta_k=1$. In the updating process,  we compute
\begin{equation}\label{ak}
a_k = \frac{1-\beta_k}{g_{k-1}^Ts_{k-1}} g_{k-1},
\end{equation}
\begin{equation}\label{B}
B_{k+1} = B_k-\frac{B_ks_ks_k^TB_k}{s_k^TB_ks_k}+
\frac{z_kz_k^T}{z_k^Ts_k},
\end{equation}
where
\begin{equation}\label{z}
z_k=\theta y_k+(1-\theta)B_ks_k, \  \theta\in[0,1],
\end{equation}
\begin{equation}
\theta=\left\{\begin{array}{l}
1,\qquad \qquad \qquad \mbox{if} \ y_k^Ts_k\geq 0.2s_k^TB_ks_k,\\
\displaystyle  \frac{0.8s_k^TB_ks_k}{s_k^TB_ks_k-y_k^Ts_k}, \;\; \mbox{otherwise},
\end{array}\right.
\end{equation}
and
$y_k=g_{k+1}-g_k$.

Let $s_k$ be the solution of the subproblem \eqref{phis}-\eqref{sDelta}  by Algorithm 2.1. Then either $x_k + s_k$ is accepted as a new iteration point or the trust region radius is reduced according to a comparison between the actual reduction of the objective function
\begin{eqnarray}
\mbox{ared}(s_k)= f(x_k)-f(x_k+s_k)  \label{aredsk}
\end{eqnarray}
and  the reduction predicted by the conic model
\begin{eqnarray}
 \mbox{pred}(s_k)= -\frac{{g_k^{\rm T}}s_k}{1-a_k^Ts_k} - \frac{1}{2}\frac{s_k^TB_ks_k}{(1-a_k^Ts_k)^2}  \label{predsk}
\end{eqnarray}
That is, if the reduction in the objective function is satisfactory, then we finish the current iteration by taking
\begin{eqnarray}
x_{k+1}=x_k+s_k
\end{eqnarray}
and adjusting the trust-region radius; otherwise the iteration is repeated at point $x_k$ with a reduced trust-region radius.

  Now  we give the  alternating direction trust-region algorithm  based on conic model (\ref{mk}).

\

\textbf{Algorithm 3.1 } (\textbf{ADCTR}).

 Step 0.  \ Choose parameters $\epsilon, \varepsilon, \varepsilon_0\in (0,1)$, $0<\eta_1< \eta_2<1$, $0<\delta_1 <1< \delta_2$  and $\bar{\Delta}>0$; give a  starting point $x_0\in R^n$, $B_0 \in R^{n\times n}$, $a_0\in R^n$ and an initial trust region radius $\Delta_0\in(0,\bar{\Delta}]$; set $k=0$.

 Step 1.  Compute $f_k$ and $g_k$. If $\| g_k \|<\varepsilon$, then stop with $x_k$ as the approximate optimal solution; otherwise go to Step 2.

 Step 2. Set $a=a_k$, $g=g_k$, $B=B_k$ and $\Delta=\Delta_k$.
Then solve the subproblem \eqref{phis}-\eqref{sDelta}  by Algorithm 2.1  to get one of the approximate solution $s_k$.

 Step 3. Compute $\mbox{ared}(s_k)$, $\mbox{pred}(s_k)$ and
\begin{equation}\label{rk}
r_k=\frac{\mbox{ared}(s_k)}{\mbox{pred}(s_k)},
\end{equation}

 If $r_k\leq \eta_1$, then set $\Delta_k=\delta_1 \Delta_k$, and go to Step 2.
 If $r_k > \eta_1$, then set $x_{k+1}=x_k+s_k$ and
\begin{equation*}
\Delta_k=\left\{\begin{array}{l}
\min\{\delta_2\Delta_k, \Bar{\Delta}\}, \  \mbox{if} \; r_k\geq \eta_2, \|s_k\|=\Delta_k,\\
\Delta_k, \quad \quad\qquad \ \   \mbox{otherwise}.
\end{array}\right.
\end{equation*}

 Step 4. Generate $a_{k+1}$ and $B_{k+1}$; set $k=k+1$, and go to Step 1.

\  

In this algorithm, the procedure of "Step 2-Step 3-Step 2" is named as inner cycle.
The following theorem guarantees that the ADCTR algorithm does not cycle infinitely in the inner cycle.

\

 \textbf{Assumption 3.1. } \ \ The level set
\[L(x_0)=\{x|f(x)\leq f(x_0)\}\]
and the sequence  $\{\| a_k \|\}$, $\{\| g_k \|\}$ and $\{\| B_k \|\}$ are all uniformly bounded,
 ${B_k}$ is symmetric and positive definite and $f$ is twice continuously differentiable in  $L(x_0)$.

 From \eqref{predsk} and Theorem 2.2, we have
\begin{equation}\label{predskk}
\mbox{pred}({s_ k })\geq \frac{1}{2} c_4\| g_ k \| \min\left\{ \Delta_ k, \frac{1}{\| a_k \|},\frac{\| g_ k \|}{\| B_ k \|} \right\},
\end{equation}
 where $c_1 $ as defined by \eqref{c1}.

\begin{theorem}\quad
Suppose that Assumption 3.1 holds.  $s_k$ is the solution of conic trust-region subproblem
\eqref{phis}-\eqref{sDelta}. If the process does not terminate at $x_k$, then we must have $r_k > \eta_1$ after a finite number of inner iterations.
\end{theorem}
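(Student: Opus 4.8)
The goal is to show that the inner cycle "Step 2–Step 3–Step 2", which repeatedly shrinks $\Delta_k$ by the factor $\delta_1<1$ while $x_k$ is held fixed, cannot run forever: after finitely many shrinkages we must get $r_k>\eta_1$. I would argue by contradiction, assuming the inner cycle never exits, so that $r_k\le\eta_1$ for every trial radius $\Delta_k^{(j)}=\delta_1^{\,j}\Delta_k^{(0)}\to 0$ as $j\to\infty$. The engine of the proof is the standard trust-region estimate: $|r_k-1| = |\mathrm{ared}(s_k)-\mathrm{pred}(s_k)|/\mathrm{pred}(s_k)$, so I need a good lower bound on $\mathrm{pred}(s_k)$ and a matching upper bound on $|\mathrm{ared}(s_k)-\mathrm{pred}(s_k)|$, and then show the ratio tends to $0$ with $\Delta_k$, contradicting $r_k\le\eta_1<1$.

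\textbf{Key steps.} First, I invoke \eqref{predskk} (which is Theorem 2.3 rewritten in the algorithm's notation): since $\|g_k\|\ge\varepsilon>0$ at a non-terminating iterate and $\{\|a_k\|\},\{\|B_k\|\}$ are uniformly bounded by Assumption 3.1, there is a constant $\sigma>0$ with $\mathrm{pred}(s_k)\ge \tfrac12 c_4\|g_k\|\min\{\Delta_k,1/\|a_k\|,\|g_k\|/\|B_k\|\}$; for all $\Delta_k$ small enough the minimum equals $\Delta_k$, so $\mathrm{pred}(s_k)\ge \sigma\Delta_k$ with $\sigma=\tfrac12 c_4\varepsilon/(\text{something})>0$. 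Second, I estimate $|\mathrm{ared}(s_k)-\mathrm{pred}(s_k)|$. Writing $\mathrm{ared}(s_k)=f(x_k)-f(x_k+s_k)$ and using a Taylor expansion of $f$ (twice continuously differentiable on the level set, with bounded Hessian by Assumption 3.1), plus the fact that the conic model $m_k(s)$ agrees with $f$ to first order at $s=0$ — i.e. $m_k(0)=f_k$, $\nabla m_k(0)=g_k$ — one gets $|\mathrm{ared}(s_k)-\mathrm{pred}(s_k)|=O(\|s_k\|^2)$. Here I must also control the denominators $1-a_k^Ts_k$ appearing in $m_k$; the subproblem constraint $|1-a_k^Ts_k|\ge\varepsilon_0$ and $\|s_k\|\le\Delta_k$ keep these bounded away from $0$ and bounded above, so the model and its derivatives are well-behaved and the remainder is genuinely $O(\Delta_k^2)$: say $|\mathrm{ared}(s_k)-\mathrm{pred}(s_k)|\le M\Delta_k^2$. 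Third, combine: $|r_k-1|\le M\Delta_k^2/(\sigma\Delta_k)=(M/\sigma)\Delta_k\to 0$ as the inner cycle proceeds, so for $j$ large $r_k>\eta_1$, contradicting the assumption that the cycle never exits. Hence the cycle terminates after finitely many inner iterations.

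\textbf{Main obstacle.} The routine part is the $O(\Delta_k^2)$ bound on $|\mathrm{ared}-\mathrm{pred}|$ for a quadratic model; the genuine care needed here is that the model is \emph{conic}, not quadratic, so the discrepancy involves the nonlinear factors $(1-a_k^Ts_k)^{-1}$ and $(1-a_k^Ts_k)^{-2}$. The key technical point is therefore to show these factors, together with their gradients in $s$ at $s=0$, contribute only second-order terms in $\|s_k\|$: expand $(1-a_k^Ts)^{-1}=1+a_k^Ts+O(\|s\|^2)$ and $(1-a_k^Ts)^{-2}=1+2a_k^Ts+O(\|s\|^2)$ uniformly, using $\|a_k\|$ bounded and $|1-a_k^Ts_k|\ge\varepsilon_0$, and check that after substituting into \eqref{predsk} the first-order part reproduces exactly $-g_k^Ts_k-\tfrac12 s_k^TB_ks_k$ minus a quadratic-or-higher remainder — so $\mathrm{pred}(s_k)$ differs from the quadratic predicted reduction only by $O(\|s_k\|^2)$, and the comparison with the Taylor expansion of $\mathrm{ared}$ goes through as in the classical quadratic case. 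Once that uniform $O(\Delta_k^2)$ estimate is in hand, the contradiction argument is immediate.
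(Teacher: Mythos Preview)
Your proposal is correct and follows essentially the same route as the paper's own proof: a contradiction argument in which $\Delta_k^{(j)}\to 0$, the lower bound $\mathrm{pred}(s_k^{(j)})\ge \eta_3\Delta_k^{(j)}$ from \eqref{predskk}, the expansion $(1-a_k^Ts)^{-1}=1+a_k^Ts+o(\|s\|)$ together with Taylor's formula for $f$ to get $|\mathrm{ared}-\mathrm{pred}|=O((\Delta_k^{(j)})^2)$, and the resulting $|r_k-1|\to 0$. The paper carries out exactly these steps, including the explicit expansion of the conic denominators that you flag as the main technical point.
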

\begin{proof}
We assume that the algorithm does not terminate at $x_k$, then  there is  $\varepsilon_1>0$ such that
 \begin{eqnarray}\label{gkbu0}
\| g_k \|\geq \varepsilon_1.
 \end{eqnarray}
 From  Assumption 3.1 we have
\begin{eqnarray}\label{agBbar}
 \| a_k \| \leq\bar{a},  \ \ \| g_k\|\leq\bar{g}, \ 0<\| B_k\| \leq \bar{B}.
\end{eqnarray}
For simplicity, we suppose that the
superscript denotes the iterative step of inner iteration at $x_k$, then
\begin{eqnarray} \label{rkDeltajk}
r_k \leq \eta_1, \ \Delta^j_{k+1} = \delta_1 \Delta^j_k, \ j = 1, 2, \cdots
\end{eqnarray}
Assume $s^j_k$ is a solution of subproblem \eqref{phis}-\eqref{sDelta} with trust-region radius $\Delta^j_k$, then it is {\color{red}easy} to know that
\begin{eqnarray}\label{Deltajk0}
\lim_{j \rightarrow \infty}  \Delta^j_k=0, \ \lim_{j \rightarrow \infty}  \|s^j_k\| = 0.
\end{eqnarray}
From  \eqref{gkbu0}, \eqref{agBbar} and  \eqref{Deltajk0},   we can obtain that there exist an integer $j_1$ and a constant $\eta_3 > 0$ such that
\begin{equation}\label{predjk}
\mbox{pred}({s^j_ k })\geq \eta_3 \Delta^j_ k, \ \forall j \geq j_1.
\end{equation}
It follows from  \eqref{rkDeltajk} that
\begin{eqnarray} \label{rketa1}
r^j_k = \frac{f_k-f(x_k+s^j_k)}{\mbox{pred}(s^j_k)}\leq  \eta_1.
\end{eqnarray}

On the other hand, from \eqref{Deltajk0} and \eqref{agBbar} we can get
\begin{eqnarray}  \label{1aBksjko}
&& \frac{1}{1-a_k^Ts^j_k}=1+a_k^Ts^j_k +o(\| s^j_k \|),\\
&& \frac{(s^j_k)^TB_ks^j_k}{2(1-a_k^Ts^j_k)^2} = \frac{1}{2}(s^j_k)^TB_ks^j_k +o({\color{red}\| s^j_k \|^2}).
\end{eqnarray}
And then, from \eqref{agBbar}-\eqref{1aBksjko} we have
\begin{eqnarray}
&&\left| f_k-f(x_k+s^j_k) -\mbox{pred}(s^j_k)\right| \nonumber\\
&=&  \left|f_k-f(x_k+s^j_k) +(1+a_k^Ts^j_k)g_k^Ts^j_k +\frac{1}{2}(s^j_k)^TB_ks^j_k+o(\| s^j_k \|^2)\right| \nonumber\\
&=& \left|-\frac{1}{2}(s^j_k)^T\nabla^2 f(x_k+\vartheta_k s^j_k)s^j_k + 
  a_k^Ts^j_k g_k^Ts^j_k+\frac{1}{2}(s^j_k)^TB_ks^j_k+o(\| s^j_k \|^2)\right|  \nonumber \\
&\leq&  \frac{1}{2}( M_1+\bar{B}+2\bar{a}\bar{g}+O(1))\| s^j_k \|^2  \nonumber\\
&\leq&  \frac{1}{2}( Q +O(1))(\Delta^j_k) ^2 , \label{ared-pred2}
\end{eqnarray}
where  $\vartheta_k\in (0,1)$ and $Q = M_1+\bar{B}+2\bar{a}\bar{g}$.
Combining with \eqref{predjk} {\color{red}and \eqref{ared-pred2}}, we can get that
\begin{eqnarray}\label{ffp}
 \left|\frac{f_k-f(x_k+s^j_k)}{\mbox{pred}(s^j_k)}-1\right| \leq  \frac{ ( Q +O(1)) }{ 2\eta_3 }\Delta^j_k , \label{ared-pred}
\end{eqnarray}
holds for all $ j \geq j_1$. By \eqref{Deltajk0} and \eqref{ffp},
\begin{eqnarray}\label{ffpeta}
 \frac{f_k-f(x_k+s^j_k)}{\mbox{pred}(s^j_k)} > \eta_1
\end{eqnarray}
holds for all sufficiently large $j$, which contradicts \eqref{rketa1}. This completes the proof.
\end{proof}

 In the following we give the global convergence property of {\color{red}Algorithm 3.1}.

\begin{theorem}\quad
Suppose that Assumption 3.1 holds.
 Then for any $\varepsilon>0$, the Algorithm 3.1 terminates in finite number of iterations,
 that is
\[\lim_{k\rightarrow\infty} \|g_k\| = 0.\]
\end{theorem}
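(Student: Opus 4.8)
The plan is to run the classical trust-region global convergence argument, powered by the predicted-reduction estimate \eqref{predskk} (itself a consequence of Theorem 2.2) and by the finite termination of the inner cycle proved in Theorem 3.1. First I would record the monotonicity facts: the outer index $k$ advances only when $r_k>\eta_1>0$, and then $\mbox{pred}(s_k)>0$ by \eqref{predskk} (using $\|g_k\|\ge\varepsilon$), so $f_{k+1}<f_k$; by Assumption 3.1 the level set $L(x_0)$ is compact, hence $f$ is bounded below on it and every iterate stays in $L(x_0)$. Thus $f_k\downarrow f_\ast$, and summing $\mbox{ared}(s_k)=r_k\,\mbox{pred}(s_k)\ge\eta_1\,\mbox{pred}(s_k)$ over all accepted iterations gives
\[
\sum_{k\ge 0}\mbox{pred}(s_k)\;\le\;\frac{1}{\eta_1}\,(f_0-f_\ast)\;<\;\infty .
\]
Combining this with \eqref{predskk} and the uniform bounds $\|a_k\|\le\bar a$, $\|B_k\|\le\bar B$ yields $\|g_k\|\min\{\Delta_k,1/\bar a,\|g_k\|/\bar B\}\to 0$.

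Next I would prove $\liminf_{k\to\infty}\|g_k\|=0$ by contradiction. If $\|g_k\|\ge\varepsilon_1>0$ for all large $k$, then $\min\{1/\bar a,\|g_k\|/\bar B\}$ stays bounded below by a positive constant, so the limit above forces $\Delta_k\to 0$. But the estimates inside the proof of Theorem 3.1 (the bound $|f_k-f(x_k+s_k)-\mbox{pred}(s_k)|\le\tfrac12(Q+O(1))\Delta_k^2$ together with $\mbox{pred}(s_k)\ge\tfrac12 c_4\varepsilon_1\Delta_k$ when $\Delta_k$ realizes the minimum) show that there is a threshold $\bar\delta>0$, depending only on $\varepsilon_1,\bar a,\bar g,\bar B$ and the Hessian bound $M_1$ on $L(x_0)$, such that whenever the current radius is $\le\bar\delta$ one has $r_k>\eta_1$ and the step is accepted without further shrinking. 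Since the radius is divided by $\delta_1$ only on a rejection, and a rejection can occur only at a radius $>\bar\delta$, one obtains $\Delta_k\ge\min\{\Delta_0,\delta_1\bar\delta\}$ for every $k$, contradicting $\Delta_k\to 0$. Hence $\liminf_k\|g_k\|=0$.

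Finally I would upgrade this to $\lim_{k\to\infty}\|g_k\|=0$. Assume not; choose $\varepsilon_2>0$ and disjoint blocks $[k_i,l_i)$ with $\|g_{k_i}\|\ge 2\varepsilon_2$, $\|g_{l_i}\|<\varepsilon_2$, and $\|g_k\|\ge\varepsilon_2$ for $k_i\le k<l_i$ (these exist because $\limsup\|g_k\|\ge 2\varepsilon_2$ while $\liminf\|g_k\|=0$), with $k_i\to\infty$. For $k$ in such a block, \eqref{predskk} gives $\mbox{pred}(s_k)\ge\tfrac12 c_4\varepsilon_2\min\{\Delta_k,1/\bar a,\varepsilon_2/\bar B\}$; since $\mbox{pred}(s_k)\to 0$, for all large $i$ the minimum is $\Delta_k$, so $\mbox{pred}(s_k)\ge\tfrac12 c_4\varepsilon_2\Delta_k\ge\tfrac12 c_4\varepsilon_2\|s_k\|$, and therefore $\sum_{k_i\le k<l_i}\|s_k\|\le\frac{2}{c_4\varepsilon_2}\sum_{k_i\le k<l_i}\mbox{pred}(s_k)\to 0$ as $i\to\infty$ (a tail of the convergent series). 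On the other hand, $g=\nabla f$ is Lipschitz on $L(x_0)$ with some constant $L$ (the Hessian being bounded there), so
\[
\varepsilon_2\;\le\;\|g_{k_i}\|-\|g_{l_i}\|\;\le\;\|g_{k_i}-g_{l_i}\|\;\le\;L\sum_{k_i\le k<l_i}\|s_k\|,
\]
i.e. $\sum_{k_i\le k<l_i}\|s_k\|\ge\varepsilon_2/L$ for every $i$, which contradicts the previous display. Hence $\|g_k\|\to 0$.

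The routine parts are the monotonicity bookkeeping and the summation of $\mbox{pred}(s_k)$; the two places requiring care are the uniform lower bound $\Delta_k\ge\min\{\Delta_0,\delta_1\bar\delta\}$ in the $\liminf$ step (which reuses, essentially verbatim, the local Taylor estimate from the proof of Theorem 3.1) and, above all, the passage from $\liminf$ to $\lim$, where one must combine the telescoping of the predicted reductions with the Lipschitz continuity of the gradient on the compact level set. I expect the latter to be the main obstacle, mostly in getting the block/step-length accounting clean.
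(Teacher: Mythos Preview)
Your argument is correct and, for the contradiction that delivers $\liminf_k\|g_k\|=0$, follows essentially the paper's route: assume $\|g_k\|\ge\varepsilon_2$ for all $k$, combine \eqref{predskk} with the uniform bounds of Assumption~3.1 to get $\mbox{pred}(s_k)\ge\tfrac12\zeta\Delta_k$, telescope $f_k-f_{k+1}\ge\eta_1\,\mbox{pred}(s_k)$ to obtain $\sum_k\Delta_k<\infty$ and hence $\Delta_k\to 0$, and then reuse the Taylor estimate from the proof of Theorem~3.1 to see that sufficiently small radii are never rejected, contradicting $\Delta_k\to 0$. The paper records this last point as ``$\Delta_{k+1}\ge\Delta_k$ for all $k\ge K$''; your formulation via a uniform lower bound $\Delta_k\ge\min\{\Delta_0,\delta_1\bar\delta\}$ is simply a cleaner packaging of the same fact.

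The one genuine difference is your final paragraph. The paper's contradiction hypothesis ``$\|g_k\|\ge\varepsilon_2$ for all $k$'' negates only $\liminf_k\|g_k\|=0$, so the paper's proof as written yields the finite-termination claim (for every tolerance the stopping test is eventually met) but not, strictly speaking, the displayed limit. Your block argument---selecting intervals $[k_i,l_i)$ on which $\|g_k\|\ge\varepsilon_2$, bounding $\sum_{k_i\le k<l_i}\|s_k\|$ by a tail of the convergent series $\sum_k\mbox{pred}(s_k)$ via $\mbox{pred}(s_k)\ge\tfrac12 c_4\varepsilon_2\|s_k\|$, and contrasting this with the Lipschitz estimate $\|g_{k_i}-g_{l_i}\|\le L\sum_{k_i\le k<l_i}\|s_k\|$---is the standard way to close that gap, and it goes through here because Assumption~3.1 gives a bounded Hessian on $L(x_0)$ and every accepted step satisfies $\|s_k\|\le\Delta_k$. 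So your proof is not merely a reformulation: it actually establishes the stronger conclusion $\lim_k\|g_k\|=0$ that the paper states.
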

\begin{proof}
We give the proof by contradiction. Suppose that  there is  $\varepsilon_2>0$ such that
 \begin{eqnarray} \label{av2}
\| g_k \|\geq \varepsilon_2, \ \forall k.
 \end{eqnarray}
Combining with  \eqref{predskk}, \eqref{agBbar} and \eqref{av2}, we have
\begin{eqnarray}\label{pred-sk}
\mbox{pred}({s_ k })\geq  \frac{1}{2} {\color{red}c_4}\varepsilon_2 \min\left\{ \Delta_ k, \frac{1}{\bar{a}},\frac{\varepsilon_2 }{\bar{B}} \right\}
\geq \frac{1}{2}\zeta\Delta_k
\end{eqnarray}
where the first inequality of \eqref{pred-sk} follows from
\begin{eqnarray*}
\min \{p, q, r\} \geq \frac{pqr}{pq +qr + rp },\  \forall p, q, r>0,
\end{eqnarray*}
 and the second inequality is from $\Delta_k \leq \bar{\Delta}$ and
\begin{eqnarray*}
\zeta = \frac{c_1 \varepsilon_2^2}{\varepsilon_2+\bar{B}\bar{\Delta}+\varepsilon_2\bar{a}\bar{\Delta}}.
\end{eqnarray*}
From Steps 3 of Algorithm 3.1 and \eqref{pred-sk}, we obtain that for  {\color{red}all $k$}
\begin{equation}
f_k-f_{k+1}\geq \eta_1 \mbox{pred}(s_k)\geq \frac{1}{2} \eta_1 \zeta\Delta_k.
\end{equation}
Since $f(x)$ is bounded from below and $f_{k+1}<f_k$, we have
\begin{equation}
\infty > \sum _{k\in S} (f_k-f_{k+1})\geq \sum _{k \in S}\left(\frac{1}{2} \eta_1 \zeta\Delta_k\right).
\end{equation}
Combining with Theorem 3.1, we know that
\begin{equation}
\sum _{k =1}^\infty  \Delta_k < \infty,
\end{equation}
which implies that
\begin{eqnarray}\label{Delta=0}
\lim_{ k\rightarrow \infty} \Delta_k =0, \ \lim_{ k\rightarrow \infty} \|s_k\| =0.
\end{eqnarray}

On the other hand,  similar to the proof of  \eqref{1aBksjko}-\eqref{ffpeta} we can obtain
\begin{eqnarray}\label{ffpeta2}
 r_k = \frac{f_k-f(x_k+s_k)}{\mbox{pred}(s_k)} > \eta_1, \ \forall k \geq K,
\end{eqnarray}
where $K$ is sufficiently large.
From Step 3 of Algorithm 3.1, it follows that
\begin{eqnarray*}
\Delta_{k+1}\geq \Delta_k, \ \forall k \geq K,
\end{eqnarray*}
 which is a contradiction to \eqref{Delta=0}.
The theorem is proved.
\end{proof}

\section{Numerical Tests}
\setcounter{equation}{0}
In this section, algorithm  \text{ADCTR} is tested with some  standard test problems from \cite{zxz95, mgh81}.
The purpose of  this paper is to propose a new method to solve the conic trust region subproblem, that is alternating direction method, so we performed algorithm  \text{ADCTR} on a limited number of test problems.
The names of the 16 test problems are listed in Table 1.

All the computations are carried out in Matlab R2015b on a microcomputer in double precision arithmetic.
These tests use the same stopping criterion $\|g_k\|\leq  10^{-5}$. The columns in the Tables have the following meanings:
 \text{No.} denotes the numbers of the test problems; $n$ is the dimension of the test problems; \text{Iter} is the   number of iterations;  $nf$ is the number of function evaluations performed; $ng$ is the number of gradient evaluations; $f_k$ is the final objective function value; $\|g\|$ is the Euclidean norm of the final gradient; CPU(s) denotes the total iteration time of the algorithm in seconds.
 The sign ��*�� means that when the number of iterations reaches 5000, the algorithm fails to stop.
The  parameters in these algorithms are
  $$a_0 = 0, \ B_0 = I, \ \varepsilon_0 = \epsilon= 10^{-5}, \ \Delta_0 = 1, \ \bar{\Delta} = 10, \ \eta_1 = 0.01, \ \eta_2 = 0.75,\  \delta_1 =0.5,\  \delta_2 =2. $$

\begin{table} [!ht]
\centering
\caption{Test functions.}
\begin{tabular}{cccc}
\toprule[1.5 pt]
  No. & Problem       &No.&  Problem  \\
\hline
1  & Cube                   &2 & Penalty-I \\
3  & Beale                  &4 & Conic \\
5  & Extended powell        &6 & Variably Dimensioned \\
7  & Rosenbrock             &8  & Extended Trigonometric  \\
9  & Tridiagonal Exponential  &10 & Brent \\
11 & Troesch                &12 & Cragg and Levy\\
13 & Broyden Tridiagonal    &14 & Brown\\
15& Discrete Boundary Value &16 & Extended Trigonometric  \\
\bottomrule[1.5pt]
\end{tabular}
\end{table}

The numerical results of algorithm  \text{ADCTR} for 16  unconstrained
optimization problems are listed in Table 2.
We note that the optimal value of these test problems is $f_*=0$.
 From Table 2, we can see that our algorithm can obtain the minimum value of the function after a finite number of iterations. And the corresponding minimum point is the stability point, which is also the optimal solution. Therefore, the performance of \text{ADCTR} is   feasible and effective.

\begin{table} [!ht ]
\centering
\caption{Results of ADCTR.}
\begin{tabular}{ccccccc}
\toprule[1.5 pt]
No.& $n$  &  \textit{Iter}&  $nf/ng$ & $f_{k}$& $  \|g\|$   & CPU (s)  \\
  \cline{1-7}
1 & 2 & 52 & 53/43 & 1.0377e-15 & 1.9477e-06 &   0.064681  \\
2 & 2 & 10 & 11/11 & 9.0831e-06 & 8.9419e-06 &   0.048493  \\
3 & 2 & 18 & 19/18 & 9.0379e-15 & 9.3925e-07 &   0.053525  \\
4 & 2 & 16 & 17/13 & 1.1407e-12 & 2.1360e-06 &   0.050445  \\
5 & 4 & 41 & 42/34 & 4.8648e-09 & 4.5887e-06 &   0.062011  \\
6 & 4 & 32 & 33/29 & 2.3856e-14 & 3.0965e-07 &   0.066287  \\
7 & 2 & 50 & 51/49 & 1.5486e-14 & 5.4101e-06 &   0.064227  \\
8 & 4 & 47 & 48/34 & 7.9158e-15 & 4.1153e-07 &   0.076865  \\
9 & 4 & 7 &  8/8 & 8.1577e-12 & 4.5905e-06 &   0.058505  \\
10 & 4 & 81 &  82/58 & 5.8024e-18 & 4.6604e-07 &   0.089702  \\
11 & 4 & 59 &  60/51 & 1.0955e-13 & 2.7230e-06 &   0.077290  \\
12 & 4 & 48 &  49/43 & 1.1247e-08 & 5.2578e-06 &   0.068215  \\
13 & 4 & 35 &  36/19 & 1.4498e-11 & 5.0442e-06 &   0.063276  \\
14 & 2 & 91 &  92/52 & 0.1998e-06 & 2.5916e-07 &   0.089294  \\
15 & 4 & 23 &  24/15 & 2.0042e-12 & 8.2898e-06 &   0.061544  \\
16 & 4 & 14 &  15/15 & 3.0282e-04 & 4.9068e-06 &   0.048488  \\
\bottomrule[1.5pt]
\end{tabular}

\end{table}

In order to analyze the effectiveness of our new algorithm, we compare \text{ADCTR} with the conic quasi-Newton trust region algorithm in which the subproblems are solved by the  dogleg method (\text{DCTR}), see Zhu \cite{zxz95} and Lu \cite{lu08}.  As the dimensions of each test problem ranging from 2 to 4000, we  have actually computed 48 numerical comparisons experiments and the numerical results  are  listed in Table 3.
 Analyzing the numerical results, we have the following conclusions: for the 16 problems, our algorithm \text{ADCTR}  is better
than the \text{DCTR} for 12 tests, is somewhat bad for 2 tests, and the two algorithms are same in efficiency for the other 2
tests; our algorithm in which the subproblems are solved by alternating direction method  is competitive with algorithm \text{DCTR} in \cite{zxz95}. Especially for large-scale problems, our new algorithm has a strong numerical stability.

\begin{table} [!ht ]
\centering
\caption{ Numerical results of \text{DCTR} and \text{ADCTR}}
\begin{tabular}{ccccccc ccc }
\toprule[1.5 pt]
\multicolumn{2}{ c }{Solver} &     \multicolumn{4}{c} {\text{DCTR}}&   \multicolumn{4}{c} {\text{ADCTR}}  \\
\cmidrule(lr){1-2}  \cmidrule(lr){3-6} \cmidrule(lr){7-10}

No. & $  n$   &\textit{Iter} &  $nf/ng$ & $  \|g\|$   & CPU (s)  &\textit{Iter} &  $nf/ng$ &$  \|g\|$   & CPU (s) \\
  \cline{1-10}
 \multirow{3}{*}{1}
     &20&746&747/517& 9.2593e-07&0.116584     &100&101/92&2.1475e-06&0.079679\\
   &200 &2387&2388/2023& 2.6984e-08&2.377265     &82&83/59&4.6090e-06&0.308822\\
    &1000 &*&*/*& *&*      &74&75/56&1.4596e-06&19.36706\\

\multirow{3}{*}{2}
 &200 &76&77/53& 3.1715e-06&0.138719      &79&80/53&4.1422e-06&0.137110\\
  &500 &96&97/62& 6.5292e-06&1.118043   &78&79/54&4.6576e-06&1.270450\\
  &1000 &82&83/57& 5.5181e-06&6.454441   &86&87/57&8.8824e-06&8.026574\\

 \multirow{4}{*}{3}
  &2 &20&21/19& 4.3711e-07&0.042443    &18&19/18&9.3925e-07&0.053525\\
  &20 &24&25/19& 2.4476e-07&0.042636   &24&25/25&4.9198e-06&0.056501\\
  &200 &27&28/24& 3.6411e-08&0.077384    &26&27/27&2.0468e-07&0.145411\\
 &2000 &29&30/25& 6.1805e-06&15.70090    &35&36/28&8.8657e-06&54.47877\\

  \multirow{3}{*}{4}
  &20 &15&16/14& 5.7802e-07&0.039310     &16&17/13&4.8378e-09&0.055520\\
  &200 &16&17/12& 3.7354e-06&0.051864    &19&20/18&3.4444e-07&0.094543\\
 &2000 &18&19/19& 3.9029e-07&13.50780    &19&20/17&2.5377e-06&17.65469\\

\multirow{3}{*}{5}
  &40 &121&122/104& 8.7449e-06&0.064345    &48&49/43& 6.0181e-06&0.076145\\
 &1000 &121&122/116& 2.3550e-06&14.01567    &92&93/77&8.1189e-06&18.098231\\
 &2000 &121&122/116& 2.6445e-06&106.0106    &69&70/60&6.2005e-06&97.24934\\

\multirow{2}{*}{6}
  &40 &120&121/75& 6.0183e-06&0.125199    &145&146/116& 7.2779e-06&0.115281\\
 &400 &*&*/*& *&*       &1124&1125/774&3.2877e-06&11.33673\\

\multirow{3}{*}{7}
  &20 &90&91/69& 1.3138e-06& 0.106181   &83&84/54& 1.0093e-06&0.082832\\
  &200 &517&518/392& 4.5464e-06&0.926231    &61&62/52&2.0797e-06 & 0.242663\\
  &2000 &326&327/294& 2.2237e-06&218.2702     &71&72/54&7.8519e-07&112.6556\\


\multirow{2}{*}{8}
  &4 &46&47/38& 9.2635e-06 & 0.054172        &47&48/34& 4.1153e-07& 0.076865\\
  &40 &*&*/*& *&*            &354&355/265& 9.1963e-06 & 0.147167\\

 \multirow{3}{*}{9}
  &40 &6&7/7& 1.1958e-06 & 0.054117      &6&7/7& 1.8900e-06&0.060258\\
  &400 &6&7/7& 1.6354e-07&0.111561    &6&7/7&2.3374e-07 & 0.133484\\
 &4000 &11&12/12&  8.4467e-07&40.15594   &11&12/12&8.9545e-07&47.93941\\

 \multirow{2}{*}{10}
  &4 &377&378/298& 8.2689e-06 & 0.175454    &81&82/58& 4.6604e-07 &0.089702\\
  &40 &*&*/*& *&*        &1260&1261/910&5.7484e-06&0.391677\\

 \multirow{3}{*}{11}
  &4 &70&71/37& 2.9831e-06 & 0.073789    &59&60/51& 2.7230e-06 &0.077290\\
  &40 &192&193/133& 4.2981e-06 & 0.108448     &132&133/122&3.1390e-06 & 0.116485\\
 &500 &*&*/*& *&*         &1119&1120/1023&9.3082e-06&21.02191\\

 \multirow{3}{*}{12}
  &4 &43&44/41& 4.4263e-06 & 0.062761   &48&49/43& 5.2578e-06 &0.068215\\
  &40 &1977&1978/1315& 8.1245e-06 & 0.369235      &190&191/146&9.5513e-06 & 0.129097\\
 &400 &*&*/*& *&*       &351&352/252&8.4470e-06&4.848008\\

\multirow{4}{*}{13}
  &4 &35&36/16& 8.9785e-06 & 0.053999     &35&36/19& 5.0442e-06&0.063276\\
 &40 &359&360/263& 9.3216e-06&0.140429    &47&48/29&7.5584e-06 & 0.084719\\
 &400 &1996&1997/1400& 9.7095e-06&14.38582    &55&56/34&9.2260e-06&0.746511\\
 &1000 &*&*/*& *&*           &52&53/36&9.5547e-06&10.46032\\

\multirow{3}{*}{14}
  &2 &98&99/59& 5.6830e-06 & 0.058219    &91&92/52& 2.5916e-07&0.089294\\
  &20 &164&165/87& 6.2362e-06&0.076377        &125&126/98&9.9306e-06 & 0.094535\\
  &200 &*&*/*& *&*              &209&210/161&9.3905e-06& 0.656179\\

\multirow{4}{*}{15}
   &4 &27&28/16& 4.2390e-07 & 0.063120     &23&24/15& 8.2898e-06&0.061544\\
 &400 &33&34/11& 8.4956e-06&0.162408       &35&36/15&7.7218e-06 & 0.202917\\ 
 &1000 &21&22/2& 9.0840e-06&0.101637    &21&22/2&9.0840e-06&0.160027\\
&4000 &25&26/2&  5.6751e-07&0.970886    &25&26/2&5.6751e-07&2.331821\\

 \multirow{3}{*}{16}
  &4 &19&20/16& 1.4241e-06 &  0.051494     &14&15/15& 4.9068e-06&0.048488\\
 &40 &518&519/329& 7.5993e-06&0.117250    &63&64/42&6.2298e-06 & 0.057708\\
 &400 &*&*/*& *&*      &60&61/48&4.6296e-06&0.571713\\
\bottomrule[1.5pt]
\end{tabular}

\end{table}

\section{Conclusions}

In this paper, we propose an alternating direction trust region method based on the conic model for
unconstrained optimization and investigate its convergence. Conic models are more flexible to approximate objective functions and have
stronger modeling property.
Alternating direction method (ADM) has been well studied in the context of linearly
constrained convex programming problems.
It is because of the significant efficiency and easy implementation of ADM that
we consider applying it to solving the trust region subproblem based on the conic model.
Initial numerical results show that our new method is competitive  and it {\color{red}is also} effective and robust for  large-scale problems.
The numerical results and the theoretical results lead us to believe that the method is worthy of further study.

In addition, the main purpose of this paper is to   explore a new  method for solving  the conic model subproblem.  Therefore, there are many {\color{red}aspects} worthy of further improvement and research in this paper.
For example, we can consider  the weak convergence assumptions that the Hessian approximations $B_k$ is symmetric
and positive semidefinite. The rate of convergence has not been studied.

\section*{Acknowledgements}
We are grateful to the editors and referees for their suggestions and comments.
This work was supported by National Natural Science Foundation of China (11771210) and  the Natural Science Foundation of Jiangsu Province (BK20141409).

\section*{References}


\begin{thebibliography}{10}
\expandafter\ifx\csname url\endcsname\relax
  \def\url#1{\texttt{#1}}\fi
\expandafter\ifx\csname urlprefix\endcsname\relax\def\urlprefix{URL }\fi
\expandafter\ifx\csname href\endcsname\relax
  \def\href#1#2{#2} \def\path#1{#1}\fi

\bibitem{Han77}
S. P. Han. A globally convergent method for nonlinear programming, Journal of
  Optimization Theory and Applications, 1977, 22(3):297--309.

\bibitem{Powell83}
M. J. D. Powell. Variable Metric Methods for Constrained Optimization, Springer
  Berlin Heidelberg, 1983.

\bibitem{yuansun97}
Y. X. Yuan, W. Y. Sun, Conic Methods for Unconstrained Minimization and Tensor
  Methods for Nonlinear Equations, Science Press, Beijing, China, 1997.

\bibitem{py90}
M. J. D. Powell, Y. X. Yuan, A trust region algorithm for equality constrained
  optimization, Mathematical Programming, 1990, 49(1):189--211.

\bibitem{Vardi1981}
A. Vardi. A trust region algorithm for equality constrained minimization:
  Convergence properties and implementation, Siam Journal on Numerical Analysis, 1981,
  22(3):575--591.
\bibitem{Boggs1987}
P. T. Boggs, R. H. Byrd, R. B. Schnabel. A stable and efficient algorithm for
  nonlinear orthogonal distance regression, SIAM Journal on Scientific and
  Statistical Computing, 1987, 8(6):1052--1078.

\bibitem{TOINT1988}
P. L. TOINT. Global convergence of a class of trust region methods for
  nonconvex minimization in hilbert space, IMA Journal of Numerical Analysis, 1988, 8(2):231--252.

\bibitem{zz90}
J. Z. Zhang, D. T. Zhu. Projected quasi-newton algorithm with trust region for
  constrained optimization, Journal of Optimization Theory and Applications, 1990,  67(2):369--393.

\bibitem{El-Alem}
M.~El-Alem. A robust trust-region algorithm with a nonmonotonic penalty
  parameter scheme for constrained optimization, Siam Journal on Optimization, 1995,  5(2):348--378.
\bibitem{Conn2000}
A. R. Conn, N. I. M. Gould, P. L. Toint. Trust-region methods, Society for
  Industrial and Applied Mathematics, 2000.
\bibitem{powell1970}
M. J. D. Powell, A hybrid method for nonlinear equations,In :Ph. D. Rabonowitz,Gordon and Breach,
eds., Numerical Methods for Nonlinear Algebraic Equations, 1970, 87--114.


\bibitem{Dennis1979}
J. E.,Dennis, H. H. W. Mei. Two new unconstrained optimization algorithms which use function and
gradient values, Journal of Optimization Theory and Applications, 1979, 28(4):453--482.

\bibitem{ZT2001}
L. Zhang, Z. Q. Tang. The Hybrid Dogleg Method to Solve Subproblems of Trust Region. Journal of Nanjing Normal University, 2001, 24(1):28--32.
\bibitem{ZXZ1987}
J. Z. Zhang,  X. J. Xu, D. T. Zhu. A nonmonotonic dogleg method for unconstrained optimization, SIAM Journal on Scientific and
  Statistical Computing, 1987, 8(6):1052--1078.

\bibitem{Zhao2000}
Y. L. Zhao, C. X. X, R. B. Schnabel. A new trust region dogleg method for unconstrained optimization, Appl. Math. J. Chinese Univ. Ser. B,
2000, 15(1):83-92.






\bibitem{Davidon1980}
W. C. Davidon, Conic approximations and collinear scalings for optimizers, Siam
  Journal on Numerical Analysis, 1980, 17(2):268--281.
  \bibitem{sc82} Schnabel R. Conic methods for unconstrained minimization and tensor methods for
  nonlinear equations. { Math Prog: The State of the Art},  (eds. A. Bachem,  M. Gr\"{o}tschel and B. Korte),  Heidelberg: Springer-Verlag,  1982: 417--438.


\bibitem{so82} Sorensen D C. Newton's method with a model trust region modification. {SIAM J Numer  Analy},  1982,  19(2): 409--426.
\bibitem{xy98} Xu C X,  Yang X Y. Convergence of conic quasi-Newton trust region  methods for unconstrained minimization. {Math Appl},  1998,  11(2): 71--76.
 \bibitem{yu00} Y X Yuan. A review of trust region algorithms for optimization. {ICIAM},  2000,  99(1):  271--282.

  \bibitem{ga81}D M Gay. Computing optimal locally constrained steps. {SIAM J  Sci Stat Comput},  1981,  2(2): 186--197.

\bibitem{py97} J M Peng, Y X Yuan. Optimality conditions for the minimization of a quadratic with two
  quadratic constraints. {SIAM J Optim},  1997,  7(3): 579--594.



\bibitem{sy2001}
W. Y. Sun, Y. X. Yuan. A conic trust-region method for nonlinearly constrained
  optimization, Annals of Operations Research, 2001, 103(1):175--191.

  \bibitem{ds96}
{ S. Di  and  W. Y. Sun,} 
\newblock A trust region method for conic model to solve unconstraind
  optimizaions,
\newblock \emph{Optimization Methods and Software}, 1996, 6(4):237--263.

\bibitem{ni05}
{ Q. Ni,}
\newblock  Optimality conditions for trust-region subproblems involving a conic
  model,
\newblock \emph{SIAM Journal on Optimization}, 2005, 15(3):826--837.
   \bibitem{zxz95}
M. Zhu, Y. Xue, Z. F. Sheng. A quasi-newton type trust region method based on
  the conic model, Numerical Mathematics A Journal of Chinese Universities, 1995,
  17(1):36--47.
  \bibitem{lu08} Lu X P,  Ni Q. A quasi-newton trust region method with a new conic model for the
  unconstrained optimization. {Appl  Math  Comput},  2008,  204(1): 373--384.
\bibitem{ZhaoS13}
L. J. Zhao, W. Y. Sun. A conic affine scaling method for nonlinear optimization with bound constraints, 2013, 30(3):1-30.
\bibitem{Gabay76}
  D. Gabay, B. Mercier. A dual algorithm for the solution of nonlinear variational problems
via flnite-element approximations, Computer and Mathematics with Applications,1976, 2(1):17--40.
 \bibitem{Chen1994}
  G. Chen, M. Teboulle. A proximal-based decomposition method for convex minimization
problems, Mathematical Programming, 1994, 64(1-3):81--101.
 \bibitem{Eckstein1994}
J. Eckstein,  M. Fukushima. Some reformulation and applications of the alternating direction method of multipliers, Large Scale Optimization: State of the Art, W. W. Hager etal eds., Kluwer Academic Publishers, 1994, 115--134.

 \bibitem{HeLY2002}
B. S. He, L. Z. Liao, D. Han, H. Yang. A new inexact alternating directions method for
monontone variational inequalities, Mathematical Programming, 2002, 92(1):103--118.

 \bibitem{KonM1998}
 S. Kontogiorgis, R. R. Meyer. A variable-penalty alternating directions method for convex
optimization, Mathematical Programming, 1998, 83(1):29--53.
 \bibitem{zhangLS17}
K. Zhang, J. S. Li, Y. C. Song, X. S. Wang. An alternating direction method of multipliers for elliptic equation constrained optimization
problem, SCIENCE CHINA Mathematics, 2017, 60(2):361--378.
 \bibitem{Xu07}
M. H. Xu. Proximal Alternating Directions Method for Structured
Variational Inequalities, Journal of Optimization Theory and Applications, 2007, 134(1):107--117.
\bibitem{Nocedal2006}
Jorge Nocedal, Stephen J. Wright.  Numerical optimization. Science Press,Beijing, China, 2006.
\bibitem{Powell1978}Powell M J D. Algorithms for nonlinear constraints that use Lagrange functions.  Math
Prog,   1978,  14(1): 224--248
\bibitem{Al-Baali2014}  M. Al-Baali.  Damped techniques for enforcing convergence of quasi-Newton methods.
Optim Meth  Softw,  2014,  29(5): 919--936
\bibitem{znz15}
H. Zhu, Q. Ni, M. L. Zeng. A quasi-newton trust region method based on a new
  fractional model, Numerical Algebra, Control and Optimization, 2015, 5(3):237--249.

 \bibitem{mgh81} More J J, Garbow B S, Hillstrom K E. Testing unconstrained optimization software. {ACM Trans. Math. Software}, 1981, 7(1): 17--41.





%
%
%


  \end{thebibliography}
\end{document}